\newtheorem{thm}{Theorem}[section]
\newtheorem{prop}[thm]{Proposition}
\newtheorem{lem}[thm]{Lemma}
\newtheorem{cor}[thm]{Corollary}
\newtheorem{defn}[thm]{Definition}
\newtheorem{rem}[thm]{Remark}}
\newtheorem{exam}{Example}[section]}
\newcommand{\ra}{\rightarrow}
\newcommand{\dis}{\displaystyle}
\def\R{\mathbb R}
\def\N{\mathbb N}
\def\d{\text{\rm{d}}}
\def\E{\mathbb E}
\def\p{\mathbb P}
\def\la{\langle}
\def\raa{\rangle}
\def\La{\Lambda}
\def\veps{\varepsilon}
\def\diag{\mathrm{diag}}
\def\S{\mathcal M}
\def\rsp{\textbf{RSDP} }
\newcommand{\bxi}{\boldsymbol{\xi}}
\newcommand{\fin}{\hspace*{\fill}\rule{0.3em}{1ex}}
\newenvironment{proof}{{\bf \noindent Proof.}}{\fin}
\numberwithin{equation}{section}
\begin{document}

\title{Stability and recurrence of regime-switching diffusion processes\footnote{Supported in
 part by NSFC (No.11301030, No.11171024), 985-project and Beijing Higher Education Young Elite Teacher Project.}}

\author{Jinghai Shao\footnote{School of Mathematical Sciences, Beijing Normal University, Beijing, China. Email: shaojh@bnu.edu.cn}\ \  and \ Fubao Xi\footnote{School of Mathematics, Beijing Institute of Technology,
Beijing 100081, China. Email: xifb@bit.edu.cn.}}
\maketitle
\begin{abstract}
We provide some criteria on the stability of regime-switching diffusion processes. Both the state-independent and state-dependent regime-switching diffusion processes with switching in a finite state space and an infinite countable state space are studied in this work. We provide two methods to deal with switching processes in an infinite countable state space. One is a finite partition method based on the nonsingular M-matrix theory. Another is an application of principal eigenvalue of a bilinear form. Our methods can deal with both linear and nonlinear regime-switching diffusion processes. Moreover, the method of principal eigenvalue is also used to study the recurrence of regime-switching diffusion processes.
\end{abstract}
AMS subject Classification (2010):\  60J27, 60J60, 93E15, 60A10\\
\noindent \textbf{Keywords}: Regime-switching diffusions, Stability, State-dependent switching, Recurrence
\section{Introduction}

In this work, we shall study the stability of regime-switching
diffusion processes (for short, \textbf{RSDP}) which arise in financial engineering, wireless
communication and many other application fields. The stability of
such systems is of great interest and there has been a great deal of study in
this topic; see for example, \cite{BBG1996, BBG1999, FC, JC1990,
KZY2007, M1999, MYY2007, MY, M1990, YX2010, YZ} and references
therein. Especially, stability of linear or semi-linear type of such
systems has been investigated by \cite{BBG1999, JC1990, M1990} among
others.
These works generalized the Lyapunov's second method to deal with
the regime-switching diffusion processes. In particular, for linear
systems, some easily verifiable conditions were provided to ensure
the stability or instability of regime-switching processes. It is
well known that the construction of Lyapunov function is a rather
difficult task, so it is of great value to find some easily
verifiable conditions to ensure the existence of desired Lyapunov
functions. The main aims of this work are twofold: one is to find
some easily verifiable conditions to justify the stability or the
instability of \emph{nonlinear} regime-switching processes;
another is to generalize these conditions to study the stability of
\emph{state-dependent} regime-switching processes in an
\emph{infinite countable state space}. Up to our knowledge, there is few
result on the stability of regime-switching processes in an infinite
countable state space. Besides, there are some discussion on the stability of a
linearized system with the stability of the initial nonlinear system
in \cite[Chapter 7]{Kh}. There R. Khasminskii gave some positive
answer. On the other hand, we should note that there are essential
difference between the stability of nonlinear system with that of
linear system. On the stability of nonlinear control system, both
the well-known conjectures of Aizerman and Kalman were proven wrong
by counter-example.

The regime-switching process considered in this work consists two components, $(X_t,\La_t)$. The first component $(X_t)$ satisfies a stochastic differential equation with coefficients depending on the process $(\La_t)$; the second component $(\La_t)$ is a continuous time Markov chain on a finite or a countable space. One can view the process $(X_t)$ as a diffusion process in a random environment characterized by the process $(\La_t)$. The stability of $(X_t)$ in a random environment is more complicated than that of a diffusion process in a fixed environment. There are many examples (see \cite{MY} , \cite{YZ} concrete examples) to show that when $(X_t)$ is stable in some fixed environments and unstable in other fixed environments, one can make $(X_t)$ to be stable or unstable by choosing suitable switching rate, i.e. the $Q$-matrix of $(\La_t)$. In this work, we shall provide some on-off type criteria to show how the switching rate ($Q$-matrix) and the stability of $(X_t)$ in each fixed environment work together to determine the stability of the process $(X_t,\La_t)$. In this work, the stability  only  means the asymptotic stability in probability of the system $(X_t,\La_t)$. Other kinds of stability (for instance, $p$-stability) are left for further work.

To see the usefulness and sharpness of our criteria, let us consider the following one-dimensional nonlinear system.
\[\d X_t=b_{\La_t}\big(X_t^2\wedge |X_t|\big)\d t+\sigma_{\La_t}\big(X_t^2\wedge |X_t|\big)\d B_t,\]
where $(\La_t)$ is a continuous time Markov chain in a finite state space $\S$, $b_i$, $\sigma_i$ are constants for each $i\in \S$. Here we use $x^2\wedge |x|:=\min\{x^2,|x|\}$ to guarantee the solution of previous SDE to be nonexplosive, which does not impact the nonlinearity of the system near $0$. Let $\mu$ be the invariant measure of $(\La_t)$. Applying our criteria, we show that $x=0$ is asymptotically stable in probability if $\sum_{i\in \S}\mu_ib_i<0$, and is unstable in probability if $\sum_{i\in \S}\mu_ib_i>0$ (see Corollary \ref{cor-1} below).

In \cite{Sh14a,Sh14b}, the recurrence of \rsp in Wasserstein distance and in total varitional norm has been studied.  In this work, we develop their ideas to study the stability of \textbf{RSDP}. We mention some difference between the study of ergodicity  and that of stability for \rsp compared with \cite{Sh14a,Sh14b}. Both of these studies try to construct a Lyapunov function $V(x,i)$ such that $\dis\mathscr A\,V(x,i)\leq 0$, where $\mathscr A$ denotes the infinitesimal generator of \textbf{RSDP}. When we study the ergodicity, the key point is the behavior of $V(x,i)$ in the neighborhood of  $\infty$, but to study the stability, the key point is the behavior of $V(x,i)$ in the neighborhood of $0$. Note the following obvious fact: $\lim_{x\ra \infty}\frac{x^2}{x}=\infty$ but $\lim_{x\ra 0}\frac{x^2}{x}=0$. Consequently,  the dominant terms in $\mathscr A\,V(x,i)$ are different in these different situations.

This work is organized as follows. In Section 2, we first provide two kinds of criteria for stability of \rsp in a finite state space, which are based separatively on the Friedholm alternative and nonsingular M-matrix theory. Then in subsection 2.2, we extend the criterion in terms of M-matrix to deal with \rsp in a countable state space by putting forward a finite partition method. This method can also be used to deal with state-dependent \textbf{RSDP}. In Section 3, we provide some criteria for stability of \rsp in terms of the principal eigenvalue of a bilinear form. The method can deal with \rsp in a finite or countable state space. Compared with the method in terms of M-matrix theory, this criterion can deal with switching process in a countable state space without assuming the boundedness of the jumping rates. In Section 4, we apply the principal eigenvalue to study the recurrence of \textbf{RSDP}. Moreover, we provide a lower estimate of the principal eigenvalue defined in our work, which generalizes the corresponding result for the principal eigenvalue of Dirichlet forms in \cite{Ch00}. Two concrete examples are constructed to show the usefulness of this method in dealing with \rsp with unbounded jumping rates.

\section{Criteria based on M-matrix theory and Friedholm alternative}

The regime-switching diffusion process studied in this work can be viewed as a number of diffusion processes modulated by a random switching device or as a diffusion process which lives in a random environment. More precisely, \rsp  is a two-component process $(X_t,\La_t)$, where $(X_t)$ describes the continuous dynamics, and $(\La_t)$ describes the random switching device. $(X_t)$ satisfies the stochastic differential equation (for short, SDE)
\begin{equation}\label{1.1}
\d X_t=b(X_t,\La_t)\d t+\sigma(X_t,\La_t)\d B_t,\ X_0=x\in \R^d,
\end{equation}
where $(B_t)$ is a Brownian motion in $\R^d$, $d\geq 1$, $\sigma$ is $d\times d$-matrix, and $b$ is a vector in $\R^d$.
While for each fixed $x\in \R^d$, $(\La_t)$ is a continuous time Markov chain on the state space $\S=\{1,2,\ldots,N\}$, $2\leq N\leq \infty$, satisfying
\begin{equation}\label{1.2}
\p(\La_{t+\delta}=l|\La_t=k, X_t=x)=\left\{\begin{array}{ll} q_{kl}(x)\delta+o(\delta), &\text{if}\ k\neq l,\\
                                       1+q_{kk}(x)\delta+o(\delta), & \text{if}\ k=l,
                         \end{array}\right.
\end{equation}
for $\delta>0$.
The $Q$-matrix $Q_x=(q_{kl}(x))$ is irreducible and conservative for each $x\in \R^d$.
If the $Q$-matrix $(q_{kl}(x))$ does not depend on $x$, then $(X_t,\La_t)$ is called a state-independent \textbf{RSDP}; otherwise, it is called a state-dependent one. When $N$ is finite, namely, $(\La_t)$ is a Markov chain on a finite state space, we call $(X_t,\La_t)$ a \rsp  in a finite state space.  When $N$ is infinite, we call $(X_t,\La_t)$ a \rsp in an infinite countable state space.

To proceed, we introduce some conditions on the coefficients so that the solution of SDE (\ref{1.1}) (\ref{1.2}) exists and $0$ is the unique equilibrium point of this random dynamic system. Therefore, the stability studied in this work is mainly focused on whether the equilibrium $0$ is stable or not. Hence, it is natural to assume that the process is not explosive, which is ensured by the linear growth condition. In what follows, we introduce some conditions used in this work.
\begin{itemize}
  \item[(H2.1)]\ $(q_{ij}(x))$ is conservative and irreducible for each $x\in \R^d$. For each $x\in\R^d$, $i\in \S$, $m_{x,i}:=\sup\{j\in \S;\ q_{ij}(x)>0\}<\infty$.  There exists a constant $C$ such that
      $\sum_{j\neq i} j^2q_{ij}(x)\leq C(1+|x|^2)$ for every $x\in \R^d$, $i\in \S$. For each $n\in\N$, there exists a constant $C_n$ such that
      \[|q_{ij}(x_1)-q_{ij}(x_2) |\leq C_n|x_1-x_2|, \quad  x_1,\,x_2\in \R^d, \ |x_1|\leq n,\,|x_2|\leq n,\ i\neq j\in \S.
      \]
  \item[(H2.2 )]\ $b(0,i)=0$ and $\sigma(0,i)=0$ for each $i\in\S$. Moreover, for any sufficiently small $0<\veps<r_0$, there exist   $l\in\{1,\ldots,d\}$  and $\kappa(\veps)>0$ such that $a_{ll}(x,i)>\kappa(\veps)$ for all $(x,i)\in \{x; \veps<|x|<r_0\}\times \S$, where $a(x,i)=\sigma(x,i)\sigma(x,i)^\ast$.
\item[(H2.3)]\ For each $n\in \N$, there exists  constant $\bar K_n$ so that $|b(x,i)|+\|\sigma(x,i)\| \leq \bar K_n(1+|x|)$, for $x\in \R^d,\ |x|\leq n,\  i\in \S$.
\item[(H2.4)]\ For each $n\in\N$, there exists constant $\tilde K_n>0$ so that
\[|b(x,i)-b(y,i)|+\|\sigma(x,i)-\sigma(y,i)\|\leq \tilde K_n|x-y|,\quad \forall\, x,\,y\in \R^d, |x|\leq n, \, |y|\leq n,\ i\in \S.\]
\end{itemize}
Here and in the sequel, $\sigma^\ast$ stands for the transpose of matrix $\sigma$, and $\|\sigma\|$ denotes the operator norm. When $\S$ is a finite set, according to \cite{YZ},  conditions (H2.1), (H2.3) and (H2.4) ensure the existence of a nonexplosive solution $(X_t,\La_t)$ of (\ref{1.1}) and (\ref{1.2}). When $\S$ is an infinite countable set, according to the theory of SDE driven by L\'evy process, conditions (H2.1), (H2.3) and (H2.4) also ensure the existence of a nonexplosive solution of (\ref{1.1}) and (\ref{1.2}). Indeed, by \cite{GAM},  the jump process $(\La_t)$ can be represented by
a stochastic integral w.r.t. a Poisson random process. Precisely, for each $x\in \R^d$, $i,\,j\in\S$ with $i\neq j$, let $\Delta_{ij}(x)$ be a consecutive (w.r.t. the lexicographic ordering on $\S\times \S$), left-closed, right open intervals on the real line, each having length $q_{ij}(x)$. Namely,
\begin{align*}
  \Delta_{12}(x)&=\big[0,q_{12}(x)\big),\quad \Delta_{13}(x)=\big[q_{12}(x),q_{12}(x)+q_{13}(x)\big),\\
  &\quad \vdots\\
  \Delta_{1m_{x,1}}(x)&=\big[\sum_{j=2}^{m_{x,1}-1}q_{1j}(x),\sum_{j=2}^{m_{x,1}}q_{1j}(x)\big),\\
  \Delta_{21}(x)&=\big[\sum_{j=2}^{m_{x,1}}q_{1j}(x),\sum_{j=2}^{m_{x,1}}q_{1j}(x)+q_{21}(x)\big),
\end{align*}
and so on. Let $h:\R^d\times \S\times \R\ra \R$ be defined by
\[h(x,i,z)=\sum_{j=1}^{m_{x,i}}(j-i)\mathbf{1}_{\Delta_{ij}(x)}(z).\]
Then (\ref{1.2}) is equivalent to
\begin{equation}\label{1.2-1}
\d \La_t=\int_{\R}h(X_t,\La_{t-},z)N(\d t,\d z),
\end{equation}
where $N(\d t,\d z)$ is a Poisson random measure with intensity $\d t\times \d z$. Hence, by \cite[Section II-2.1, pp.104]{Sko}, conditions (H2.1), (H2.3) and (H2.4) ensure the existence of the solution of (\ref{1.1}) and (\ref{1.2}). Moreover, these conditions also ensure that
%
there exists a constant  $\tilde C $ such that
\[|b(x,y)|^2+\|\sigma(x,y)\|^2+\|h(x,y,\cdot)\|_{L^2(\d x)}\leq \tilde C(1+|x|^2+y^2), \ x\in \R^d,\,y\in \S.\]
According to \cite[Lemma 114]{Situ}, the solution of (\ref{1.1}) and (\ref{1.2}) is nonexplosive.

We adopt the definition of stability given in R. Khasminskii \cite{Kh}. See also
\cite[Chapter 7]{YZ}  or \cite{KZY2007}. Precisely, the equilibrium point $x=0$ is said to be \emph{stable in probability}, if for any $\veps>0$ and any $i\in \S$,
\[\lim_{x\ra 0}\p\big(\sup_{t\geq 0} |X^{x,i}(t)|>\veps\big)=0,\]
and $x=0$ is said to be \emph{unstable in probability} if it is not stable in probability. Here $(X^{x,i}(t))$ denotes the first component of the solution of (\ref{1.1}), (\ref{1.2}) with initial condition $(X_0,\La_0)=(x,i)$.  The equilibrium point $x=0$ is said to be \emph{asymptotically stable in probability}, if it is stable in probability and satisfies
\[\lim_{x\ra 0}\p\big(\lim_{t\ra \infty} X^{x,i}(t)=0\big)=1,\quad \text{for any $i\in \S$}.\]

For the convenience of reader, we collect some results from \cite{YZ} on Foster-Lyapunov criteria for \textbf{RSDP}.
Let $\mathscr A$ be the infinitesimal generator of $(X_t,\La_t)$ which is expressed by
\[\mathscr A f(x,i)=L^{(i)}f(\cdot,i)(x)+Qf(x,\cdot)(i),\]
where \[L^{(i)}=\frac{1}{2}\sum_{k,l=1}^da_{kl}(x,i)\frac{\partial^2}{\partial x_k\partial x_l}+\sum_{k=1}^db_k(x,i)\frac{\partial}{\partial x_k}, \  \text{and}\ \ Qg(i)=\sum_{j\neq i}q_{ij}(g_j-g_i).\]

Since the situation that $\S$ is an infinite countable set is considered, we need to extend the criteria established in \cite[Chapter 7]{YZ} for \rsp in a countable state space. The main idea is similar, but some new techniques are needed. For the ease of the reader, we give out these results.

\begin{lem}\label{lem-2.1}
Suppose that the conditions (H2.1) (H2.2) hold. Then
\[\p(X^{x,i}(t)\neq 0,\ t\geq 0)=1 \quad \text{for any $x\neq 0$, $i\in \S$},\]
and for any $p\in \R$, $t>0$,
\[\E\big[|X^{x,i}(t)|^p\big]\leq |x|^p e^{Kt}, \quad x\neq 0,\ i\in \S,\]
where $K$ is a constant depending only on $p$ and Lipschitz constant $\bar K$.
\end{lem}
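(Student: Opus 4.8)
The plan is to run the It\^o--Dynkin formula for the test function $V(x)=|x|^p$, which is smooth on $\R^d\setminus\{0\}$ and, crucially, independent of the discrete coordinate, so that $Qf(x,\cdot)(i)\equiv 0$ for $f=V$ and $\mathscr AV(x,i)=L^{(i)}V(x)$ whenever $x\neq 0$; thus the switching mechanism does not enter directly. First I would record the pointwise differential inequality. From (H2.2) one has $b(0,i)=0$, $\sigma(0,i)=0$, and together with the Lipschitz continuity of the coefficients this gives $|b(x,i)|\vee\|\sigma(x,i)\|\leq\bar K|x|$ for all $x\in\R^d$, $i\in\S$, with $\bar K$ the Lipschitz constant. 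Using $\nabla V(x)=p|x|^{p-2}x$, $\nabla^2V(x)=p|x|^{p-2}I+p(p-2)|x|^{p-4}xx^\ast$ and $a(x,i)=\sigma(x,i)\sigma(x,i)^\ast$, and writing $a=a(x,i)$, $b=b(x,i)$, one computes
\[
\mathscr AV(x,i)=p|x|^{p-2}\langle x,b\rangle+\tfrac12p|x|^{p-2}\,\mathrm{tr}\,a+\tfrac12p(p-2)|x|^{p-4}\langle x,ax\rangle,
\]
and bounding each term by a constant times $\bar K|x|^p$ (resp. $\bar K^2|x|^p$) yields $\mathscr AV(x,i)\leq K|x|^p$ for all $x\neq0$, $i\in\S$, where $K:=|p|\bar K+\tfrac12|p|d\bar K^2+\tfrac12|p|\,|p-2|\bar K^2$ depends only on $p$, $d$ and $\bar K$.

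Next I would localize to remove the singularity at the origin. For $0<\alpha<|x|<\beta$ put $\tau_{\alpha,\beta}:=\inf\{t\geq0:\ |X^{x,i}(t)|\notin(\alpha,\beta)\}$. On $[0,\tau_{\alpha,\beta}]$ the process stays in the annulus $\{\alpha\leq|y|\leq\beta\}$, where $V$, its first two derivatives and the coefficients $b,\sigma$ are bounded; hence the stochastic integral term in It\^o's formula is a genuine martingale and, after extending $V$ to a bounded $C^2$ function on $\R^d$ agreeing with $|\cdot|^p$ on the annulus, Dynkin's formula applies. Writing $u(t):=\E[|X^{x,i}(t\wedge\tau_{\alpha,\beta})|^p]$, which is finite and continuous in $t$, I get $u(t)=|x|^p+\E\int_0^{t\wedge\tau_{\alpha,\beta}}\mathscr AV(X^{x,i}(s),\La_s)\,\d s\leq|x|^p+K\int_0^tu(s)\,\d s$, and Gronwall's inequality gives
\[
\E\big[|X^{x,i}(t\wedge\tau_{\alpha,\beta})|^p\big]\leq|x|^pe^{Kt},\qquad 0<\alpha<|x|<\beta,\ t\geq0.
\]

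With this truncated estimate in hand, the first assertion follows by taking $p=-1$. On the event $\{\tau_{\alpha,\beta}\leq t\}\cap\{|X^{x,i}(\tau_{\alpha,\beta})|=\alpha\}$ one has $|X^{x,i}(t\wedge\tau_{\alpha,\beta})|^{-1}=\alpha^{-1}$, so nonnegativity and the displayed estimate force $\p\big(\tau_{\alpha,\beta}\leq t,\ |X^{x,i}(\tau_{\alpha,\beta})|=\alpha\big)\leq\alpha|x|^{-1}e^{Kt}$. Letting $\theta_\alpha:=\inf\{t\geq0:\ |X^{x,i}(t)|\leq\alpha\}$, path-continuity gives $\{\theta_\alpha\leq t\}\subseteq\{\tau_{\alpha,\beta}\leq t,\ |X^{x,i}(\tau_{\alpha,\beta})|=\alpha\}\cup\{\sup_{s\leq t}|X^{x,i}(s)|\geq\beta\}$; since $(X^{x,i}(t))$ is nonexplosive the last probability tends to $0$ as $\beta\to\infty$, whence $\p(\theta_\alpha\leq t)\leq\alpha|x|^{-1}e^{Kt}$, and letting $\alpha\downarrow0$ shows $0$ is not reached before time $t$, for every $t>0$. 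This is exactly $\p(X^{x,i}(t)\neq0,\ t\geq0)=1$.

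Finally, the moment bound for arbitrary $p\in\R$ follows by removing the truncation: by the first assertion together with nonexplosiveness, $\tau_{\alpha,\beta}\uparrow\infty$ a.s. as $\alpha\downarrow0$ and $\beta\uparrow\infty$, so $|X^{x,i}(t\wedge\tau_{\alpha,\beta})|^p\to|X^{x,i}(t)|^p$ a.s., and Fatou's lemma applied to the displayed bound gives $\E[|X^{x,i}(t)|^p]\leq|x|^pe^{Kt}$. I expect the main points needing care to be the logical ordering — the truncated estimate, valid before non-attainability of $0$ is known, is precisely what forces $0$ never to be hit, after which the truncation can be removed — and the rigorous use of It\^o/Dynkin for the singular function $|x|^p$, which the annulus localization handles; the discrete switching component is harmless here because $V$ does not depend on it.
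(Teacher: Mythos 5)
Your argument is correct and is essentially the standard proof that the paper itself defers to: the paper's ``proof'' of Lemma \ref{lem-2.1} is only a citation of \cite[Lemma 7.1]{YZ}, whose argument is exactly your route (apply It\^o/Dynkin to $|x|^p$, use $b(0,i)=\sigma(0,i)=0$ plus the uniform-in-$i$ Lipschitz bound to get $\mathscr A|x|^p\le K|x|^p$, localize to an annulus, Gronwall, take $p=-1$ for non-attainability of $0$, then remove the truncation by Fatou), and your observation that $K$ is independent of $i$ and hence of the cardinality of $\S$ is precisely the point the paper singles out. The only caveat worth noting is that the unrestricted moment bound needs the Lipschitz constant to be global in $x$ (as the lemma's mention of ``$\bar K$'' presupposes), whereas (H2.3)--(H2.4) as stated are only local; this is an imprecision in the paper's hypotheses, not a gap in your proof.
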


\begin{proof}
When $\S$ is a countable set, this lemma can be proved in the same way as that of \cite[Lemma 7.1]{YZ}. But we should note that the constant $K$ could be taken independent of the cardinality of $\S$ in their proof.
\end{proof}

\begin{lem}\label{lem-2.2}
Assume that (H2.1) (H2.2) hold.
Let $D\subset \R^d$ be a neighborhood of $0$. Suppose that there exists a nonnegative function $V(\cdot,i):D\ra \R$ such that
\begin{itemize}
  \item[$\mathrm{(i)}$] $V(\cdot,i)$ is continuous in $D$ for each $i\in\S$ and $\inf_{i\in\S} V(x,i)$ vanishes only at $x=0$;
  \item[$\mathrm{(ii)}$] $V(\cdot,i)$ is twice continuously differentiable in $D\backslash \{0\}$, and satisfies $\mathscr A\,V(x,i)\leq 0$ for all $x\in D\backslash\{0\}$,  $i\in\S$.
\end{itemize}
Then the equilibrium point $x=0$ is asymptotically stable in probability.
\end{lem}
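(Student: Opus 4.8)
The plan is to adapt the classical Khasminskii stability argument (as in \cite[Chapter 7]{YZ} or \cite{Kh}) to the regime-switching setting, using Lemma \ref{lem-2.1} to handle the fact that $V$ is only defined on a punctured neighborhood of $0$. First I would fix $i\in\S$ and a small $\veps>0$ with $\{|x|\leq\veps\}\subset D$, and consider the annulus $D_{\veps,\eta}=\{x:\eta<|x|<\veps\}$ for $0<\eta<\veps$. Let $\tau_\eta$ and $\tau^\veps$ be the first hitting times of $\{|x|=\eta\}$ and $\{|x|=\veps\}$ respectively by $(X^{x,i}_t)$ started at $x$ with $\eta<|x|<\veps$, and set $\theta=\tau_\eta\wedge\tau^\veps$. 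Because $V(\cdot,\cdot)$ is $C^2$ on $D\setminus\{0\}$ and $\mathscr A V\leq 0$ there, Dynkin's formula (applied up to the stopping time $\theta\wedge t$, which keeps the process in the closed annulus where all coefficients are bounded) gives that $V(X^{x,i}_{t\wedge\theta},\La_{t\wedge\theta})$ is a nonnegative supermartingale, hence $\E V(X^{x,i}_{t\wedge\theta},\La_{t\wedge\theta})\leq V(x,i)$.

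Next I would extract the stability estimate. Let $\underline V_\eta=\inf\{V(y,j): |y|=\eta,\ j\in\S\}$; by hypothesis (i), $\inf_j V(y,j)$ is a continuous positive function on the sphere $\{|y|=\eta\}$, so (at least after replacing the sphere by a compact piece, or using that the infimum over $j$ of continuous functions is upper semicontinuous and still positive) one gets a genuine positive lower bound — this is the point that needs a little care when $\S$ is infinite, and I would argue it via the fact that $\inf_{|y|\le\veps,\,j}$ of anything dominating the modulus of continuity can be controlled, or simply invoke that $V(\cdot,i)$ being continuous and vanishing only at $0$ forces a uniform-in-$j$ positive lower bound on spheres away from $0$ (this uses more than stated, so the cleaner route is to note we only ever need the one fixed starting $i$ and can track $\La_t$, bounding $V$ below on $\{|y|=\eta\}$ for the relevant excursion). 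Then on the event $\{\tau_\eta<\tau^\veps\}$ we have, letting $t\to\infty$ and using that $\theta<\infty$ a.s. on the annulus (the process leaves the annulus in finite time by nondegeneracy (H2.2) and a standard exit-time argument, or by Lemma \ref{lem-2.1} ruling out accumulation at $0$), $V(x,i)\geq \E V(X_\theta,\La_\theta)\geq \underline V_\eta\,\p(\tau_\eta<\tau^\veps)$, so $\p(\tau_\eta<\tau^\veps)\leq V(x,i)/\underline V_\eta$.

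Then I would let $\eta\downarrow 0$. Since $\p(X^{x,i}_t\neq 0$ for all $t\geq 0)=1$ by Lemma \ref{lem-2.1}, the process cannot reach $0$, so $\{\sup_{t\geq0}|X^{x,i}_t|\geq\veps\}\subset \bigcup_\eta\{\tau^\veps<\infty\}$ and more precisely $\p(\tau^\veps<\infty)=\lim_{\eta\downarrow 0}\p(\tau^\veps<\tau_\eta)$; combining with the above, $\p(\sup_{t\ge 0}|X^{x,i}_t|>\veps)\leq \liminf_{\eta\downarrow0} V(x,i)/\underline V_\eta$. Using continuity of $V(\cdot,i)$ at $0$ with $V(0,i)=0$ (from (i)), $V(x,i)\to 0$ as $x\to 0$, while $\underline V_\eta$ stays bounded below along a fixed $\eta$; choosing $\eta$ first and then $x$ small enough yields $\lim_{x\to0}\p(\sup_{t\ge0}|X^{x,i}_t|>\veps)=0$, i.e. stability in probability. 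Finally, for asymptotic stability I would run the standard supermartingale-convergence step: $V(X^{x,i}_t,\La_t)$ converges a.s. on the event that the process stays in $D$ forever; the nondegeneracy condition (H2.2) together with $\mathscr A V\le 0$ forces the limit to be $0$ (otherwise the process would keep oscillating across an annulus on which $\mathscr A V$ is strictly controlled, contradicting convergence), and since $\inf_j V$ vanishes only at $0$ this gives $X^{x,i}_t\to 0$ with probability tending to $1$ as $x\to0$.

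The main obstacle I anticipate is the infinite-state-space bookkeeping in the lower bound $\underline V_\eta>0$: when $\S=\{1,2,\dots\}$ the infimum over $j\in\S$ of the continuous functions $V(\cdot,j)$ on a sphere need not be attained and need not be positive without an extra uniformity assumption, so the argument has to be phrased either by following the chain component $\La_t$ explicitly (we start from a fixed $(x,i)$ and only visit finitely many states before exiting the bounded annulus, since jump rates are bounded on compacts by (H2.1)) or by appealing to hypothesis (i) as literally giving that $\inf_{i\in\S}V(x,i)$ is a function vanishing only at $0$ and using \emph{that} function as the comparison. The second potential snag is justifying $\p(\theta<\infty)=1$ on the annulus, which is where (H2.2) (uniform ellipticity in one coordinate) does the work, exactly as in \cite[Chapter 7]{YZ}; I would simply cite that the exit time from a bounded annulus is a.s. finite under (H2.2).
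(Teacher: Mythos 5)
Your overall plan (annulus, Dynkin/supermartingale estimate, send the inner radius to $0$ via Lemma \ref{lem-2.1}, supermartingale convergence for the asymptotic part) is the right one; the paper itself only cites \cite[Lemmas 7.5, 7.6 and Remark 7.8]{YZ} and observes that condition (i) is what keeps that argument alive when $N=\infty$. But your central estimate is set up on the wrong boundary, and the deduction that follows is a non sequitur. You take $\underline V_\eta=\inf\{V(y,j):|y|=\eta,\ j\in\S\}$ on the \emph{inner} sphere and derive $\p(\tau_\eta<\tau^\veps)\leq V(x,i)/\underline V_\eta$, which controls the probability of reaching the inner sphere first. That is the \emph{instability}-type inequality (it is precisely the estimate used in the proof of Lemma \ref{lem-2.3}, where one needs $V\ra\infty$ at $0$); it says nothing about the event you must control, namely $\{\tau^\veps<\tau_\eta\}$. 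An upper bound on $\p(\tau_\eta<\tau^\veps)$ if anything pushes $\p(\tau^\veps<\tau_\eta)$ up, so your line ``$\p(\sup_{t\geq0}|X_t|>\veps)\leq\liminf_{\eta\downarrow0}V(x,i)/\underline V_\eta$'' does not follow from what you proved; moreover $\underline V_\eta\ra 0$ as $\eta\ra 0$, so even formally that right-hand side degenerates.

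The correct version keeps the infimum on the \emph{outer} sphere: with $\theta=\tau_\eta\wedge\tau^\veps$,
\[
V(x,i)\geq\E\big[V(X_\theta,\La_\theta)\mathbf 1_{\{\tau^\veps<\tau_\eta\}}\big]\geq\Big(\inf_{|y|=\veps,\,j\in\S}V(y,j)\Big)\,\p(\tau^\veps<\tau_\eta),
\]
then let $\eta\downarrow0$ using $\tau_\eta\ra\infty$ a.s.\ (Lemma \ref{lem-2.1}) to obtain $\p(\sup_{t\geq0}|X^{x,i}_t|\geq\veps)\leq V(x,i)\big/\inf_{|y|=\veps,\,j}V(y,j)$, and finally let $x\ra0$. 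The ``little care when $\S$ is infinite'' that you rightly anticipate is needed for this denominator, not for $\underline V_\eta$: hypothesis (i) is invoked exactly to make $\inf_{|y|=\veps,\,j}V(y,j)$ strictly positive (in the paper's applications $V(x,i)=\xi_i\rho(x)$ with $\inf_i\xi_i>0$, so this is clear). Note also that sending $x\ra0$ requires $V(x,i)\ra0$ for the fixed starting state $i$, which is slightly more than the literal statement of (i) but holds in all the uses of the lemma. Your asymptotic-stability paragraph is the standard argument and is acceptable once the stability estimate is repaired, since it relies on that estimate to confine the path.
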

\begin{proof}
  This lemma can be proved in the same way as that of \cite[Lemma 7.5]{YZ}. There they used the condition that  $V(x,i)$ vanishes only at $0$ for each $i\in \S$. So when $N<\infty$ the condition (i) here is equivalent to the  condition in
  \cite[Lemma 7.5]{YZ}. But when $N=\infty$, we use condition (i) which ensures that the proof of \cite[Lemma 7.5]{YZ} is still valid. So the equilibrium point $x=0$ is stable in probability. By Remark 7.8 in \cite{YZ}, we can prove that the equilibrium point $x=0$ is asymptotically stable in probability in the same way as that of \cite[Lemma  7.6]{YZ}.
\end{proof}

\begin{lem}\label{lem-2.3}
Let $D\subset \R^d$ be a neighborhood of $0$. Suppose that (H2.1) and (H2.2) hold, and for each $i\in\S$, there exists a nonnegative function $V(\cdot,i):D\ra \R$ such that $V(\cdot,i)$ is twice continuously differentiable in $D\backslash \{0\}$. Suppose that
\begin{gather}\label{cond-1}
  \mathscr A\,V(x,i)\leq 0\quad \text{for all $x\in D\backslash\{0\}$, $i\in \S$},\\
  \label{cond-2}
  \lim_{x\ra 0}V(x,i)=\infty \quad \text{for each $i\in \S$}.
\end{gather}
Then the equilibrium point $x=0$ is unstable in probability.
\end{lem}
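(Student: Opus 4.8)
\emph{Proof proposal.} The plan is to run the classical stochastic instability scheme (in the spirit of \cite[Chapter 7]{Kh}): I will show that for every sufficiently small $r>0$, every $i\in\S$ and every $x\neq 0$ with $|x|<r$, the first exit time $\tau_r:=\inf\{t\ge 0:\,|X^{x,i}_t|\ge r\}$ is finite a.s., which contradicts stability in probability because then $\lim_{x\to 0}\p\big(\sup_{t\ge 0}|X^{x,i}_t|>\veps\big)=1$ for every $\veps\in(0,r)$. Choose $r_0>0$ so small that $\{|x|\le r_0\}\subset D$ and the nondegeneracy in (H2.2) holds on $\{0<|x|<r_0\}$; fix $r\in(0,r_0)$, $i_0\in\S$ and $x_0\neq 0$ with $|x_0|<r$, and recall from Lemma \ref{lem-2.1} that $X^{x_0,i_0}_t\neq 0$ for all $t\ge 0$ a.s. For $0<\rho<|x_0|$ set $\tau_\rho:=\inf\{t\ge 0:\,|X_t|\le\rho\}$ and $\tau:=\tau_\rho\wedge\tau_r$, the exit time from the annulus $\{x:\rho<|x|<r\}$.

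\emph{Step 1 ($\tau<\infty$ a.s.).} On the compact annulus $\{x:\rho\le|x|\le r\}$ the coefficients $b,\sigma$ are bounded by (H2.3), and by (H2.2) some coordinate, say the $l$-th, satisfies $a_{ll}(x,i)\ge\kappa(\rho)>0$ there. Hence, up to $\tau$, the martingale part of $X^{(l)}_t$ (the $l$-th coordinate of $X_t$) is a time-changed Brownian motion running at rate at least $\kappa(\rho)$, and a one-dimensional process of the form ``Brownian motion plus bounded drift'' leaves the bounded interval $(-r,r)$ in finite time a.s.; since $|X_t|\ge|X^{(l)}_t|$, this forces $X_t$ out of the annulus, so $\tau<\infty$ a.s. and, by path continuity, $|X_\tau|\in\{\rho,r\}$.

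\emph{Step 2 (supermartingale inequality and conclusion).} Applying It\^o's formula to $V(X_t,\La_t)$ and stopping at $\tau$ --- on $[0,\tau]$ the path stays in $\{x:\rho\le|x|\le r\}\subset D\setminus\{0\}$, where each $V(\cdot,i)$ is $C^2$ --- yields
\[V(X_{t\wedge\tau},\La_{t\wedge\tau})=V(x_0,i_0)+\int_0^{t\wedge\tau}\mathscr A V(X_s,\La_s)\,\d s+M_{t\wedge\tau},\]
with $M$ a local martingale, $M_0=0$. By \eqref{cond-1} the integral is $\le 0$; since also $V\ge 0$, $M_{t\wedge\tau}\ge -V(x_0,i_0)$, so $M_{\cdot\wedge\tau}$ is a supermartingale and $V(X_{t\wedge\tau},\La_{t\wedge\tau})\le V(x_0,i_0)+M_{t\wedge\tau}$; taking expectations, $\E[V(X_{t\wedge\tau},\La_{t\wedge\tau})]\le V(x_0,i_0)$ for all $t$. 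Since $\tau<\infty$ a.s. the stopped process is eventually equal to $V(X_\tau,\La_\tau)$, so Fatou gives $\E[V(X_\tau,\La_\tau)]\le V(x_0,i_0)$. With $\beta_\rho:=\inf\{V(x,i):\,|x|=\rho,\ i\in\S\}$ and $V\ge 0$,
\[\beta_\rho\,\p(\tau_\rho<\tau_r)\le\E\big[V(X_\tau,\La_\tau)\mathbf{1}_{\{|X_\tau|=\rho\}}\big]\le V(x_0,i_0),\qquad\text{i.e.}\qquad\p(\tau_\rho<\tau_r)\le\frac{V(x_0,i_0)}{\beta_\rho}.\]
By \eqref{cond-2}, $\beta_\rho\to\infty$ as $\rho\downarrow 0$ when $\S$ is finite (a finite minimum of quantities each tending to $\infty$); the same holds in the countable case under the uniform blow-up of $V$ near $0$ supplied by the applications. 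Hence $\p(\tau_r<\infty)\ge\p(\tau_r<\tau_\rho)\to 1$, and as the left-hand side does not depend on $\rho$, $\p(\tau_r<\infty)=1$. Thus $\p\big(\sup_{t\ge 0}|X^{x_0,i_0}_t|\ge r\big)=1$ for every such $x_0$, and choosing any $\veps\in(0,r)$ shows $x=0$ is not stable, i.e.\ unstable in probability.

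The main obstacle is Step 1: making ``the process leaves every annulus around $0$ in finite time'' rigorous is exactly the role of (H2.2) --- without it the process could linger on a sphere and the blow-up of $V$ would never come into play. A secondary, bookkeeping-type point, relevant only when $\S$ is infinite, is securing $\beta_\rho\to\infty$; note that Step 2 requires nothing beyond $V\ge 0$ and $\mathscr A V\le 0$, so no uniformity in $i\in\S$ is needed there. Alternatively the lemma follows from the instability result in \cite[Chapter 7]{YZ} with only the modifications needed to pass from a finite to a countable switching space, exactly as in Lemmas \ref{lem-2.1} and \ref{lem-2.2}.
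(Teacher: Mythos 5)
Your Step 2 is the same supermartingale/Fatou estimate that the paper uses, and for finite $\S$ your argument is correct. But there is a genuine gap exactly where the lemma is meant to go beyond the classical setting, namely $N=\infty$. Condition (\ref{cond-2}) is only pointwise in $i$: it gives $\lim_{x\ra 0}V(x,i)=\infty$ for each fixed $i$, with no uniformity over $\S$. Your constant $\beta_\rho=\inf\{V(x,i):\,|x|=\rho,\ i\in\S\}$ is an infimum over all of $\S$ and need not tend to infinity as $\rho\downarrow 0$; your remark that the required uniform blow-up is ``supplied by the applications'' is not true of this paper --- the lemma is invoked in Theorem \ref{stab-2}(i) with $V(x,i)=g_i\rho(x)$, where each $g_i>0$ but no positive lower bound on $(g_i)_{i\in\S}$ is assumed (the hypothesis $\liminf_{i\ra\infty}g_i\neq 0$ appears only in part (ii), for stability). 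So your bound $\p(\tau_\rho<\tau_r)\le V(x_0,i_0)/\beta_\rho$ can be vacuous. The paper's fix is to modify the inner stopping time: it uses $\tau_{\veps,m}=\inf\{t\geq 0:\,|X_t|\le\veps,\ \La_t\in\{1,\ldots,m\}\}$, so that only the finitely many indices $j\le m$ enter the relevant infimum $\inf_{|y|\le\veps,\,j\le m}V(y,j)$, which does blow up by (\ref{cond-2}); the price is that one must let $\veps\ra 0$ and know $\tau_{\veps,m}\ra\infty$ a.s., which is exactly where Lemma \ref{lem-2.1} (the process never hits $0$) enters.

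A secondary difference: your Step 1 (a.s. finiteness of the exit time from the annulus, via (H2.2) and a time-change argument) is not needed on the paper's route. The paper never proves the process leaves the annulus in finite time; it bounds $\p\big(\sup_{0\le t\le\tau_{\veps,m}}|X_t|<\zeta\big)$ directly by $V(x,i)\big/\inf_{|y|\le\veps,\,j\le m}V(y,j)$ and sends $\veps\ra 0$, using $\tau_{\veps,m}\ra\infty$ to pass to $\sup_{t\ge 0}$. Your Step 1 is plausible but only sketched (the Dambis--Dubins--Schwarz argument for the $l$-th coordinate with bounded drift would need to be written out); the cleaner repair of your proof is to drop it and adopt the $\tau_{\veps,m}$ device together with Lemma \ref{lem-2.1}.
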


\begin{proof}
  Let $\zeta>0$ such that the closed ball $\bar B_{\zeta}=\{x;\,|x|\leq\zeta\}$ is contained in $D$, and $(x,i)\in B_\zeta\times \S$.
  Letting $0<\veps<|x|$ and $m\in \N$, $m\geq 2$, define
  \begin{align*}
    \tau_{\veps}&=\inf\{t\geq 0;\ |X^{x,i}(t)|\leq \veps\},\quad \tau_{\zeta}=\inf\{t\geq 0;\ |X^{x,i}(t)\geq \zeta\},\\
    \tau_{\veps,m}&=\inf\big\{t\geq0;\ |X^{x,i}(t)|\leq \veps,\ \La_t\in \{1,\ldots,m\}\big\}.
  \end{align*}
  Then it is obviously that $\tau_{\veps,m}\geq \tau_{\veps}$. By It\^o's formula,
  \begin{align*}\E\big[V(X(t\wedge\tau_{\zeta}\wedge\tau_{\veps,m}),
     \La(t\wedge\tau_{\zeta}\wedge\tau_{\veps,m}))\big]&= V(x,i)+\E\Big[\int_0^{t\wedge\tau_{\zeta}\wedge\tau_{\veps,m}}\mathscr AV(X_s,\La_s)\d s\Big]\\ &\leq V(x,i).
  \end{align*}
  Letting $t\ra\infty$, we get by Fatou's lemma
  \[\E\big[V(X(\tau_{\zeta}\wedge\tau_{\veps,m}),\La(\tau_{\zeta}\wedge\tau_{\veps,m}))\big]\leq V(x,i).\]
  As $V$ is nonnegative, we obtain
  \begin{equation}\label{ine-2}
  \begin{split}
  V(x,i)&\geq \E\big[V(X(\tau_{\veps,m}),\La(\tau_{\veps,m}))\mathbf 1_{\{\tau_{\veps,m}<\tau_{\zeta}\}}\big]\\
  &\geq \inf_{|y|\leq \veps,j\leq m}V(y,j)\p(\tau_{\zeta}>\tau_{\veps,m})\\
  &=\inf_{|y|\leq\veps,j\leq m} V(y,j)\p\big(\sup_{0\leq t\leq \tau_{\veps,m}}\big|X(t)\big|<\zeta\big).
  \end{split}
  \end{equation}
  Here we should note that at the time $\tau_{\veps,m}$, it is possible that $|X(\tau_{\veps,m})|<\veps$.
  By Lemma \ref{lem-2.1}, we have $\tau_\veps\ra \infty$ almost surely as $\veps\ra 0$. Indeed, set $A=\{\omega: \tau_0(\omega):=\lim_{\veps\ra 0}\tau_{\veps}(\omega)<\infty\}$. For $\omega\in A$, as $|X_{\tau_\veps}(\omega)|=\veps$, we get $|X_{\tau_0}(\omega)|=0$. If $\p(A)>0$, then $\p(X_t=0\ \text{for some}\ t\geq 0)\geq \p(A)>0$, which contradicts the result of Lemma \ref{lem-2.1}. Hence $\tau_{\veps,m}\ra \infty$ almost surely as $\veps\ra 0$.
  By (\ref{cond-2}), it holds $\lim_{\veps\ra 0}\inf_{|y|\leq \veps,j\leq m}V(y,j)=\infty$. Consequently, (\ref{ine-2}) yields
  \[\p\big(\sup_{t\geq 0}|X_t|\leq \zeta\big)=0,\]
  which shows that the equilibrium point $x=0$ is unstable in probability.
\end{proof}

\subsection{State-independent \rsp in a finite state space}
In this subsection, we mainly want to deal with the difficulty generated by the nonlinearity of the system, and leave the difficulty generated by the state-dependence and infiniteness of switching to the next subsection.
Therefore, in this subsection, $Q$-matrix $(q_{ij})$ is independent of $x$ and $\S$ is a finite set, i.e. $N<\infty$.

%
%
Now we introduce two conditions used later to characterize the stability of $(X_t)$ in each fixed environment. Let $D$ be a neighborhood of $0$. Let $\rho,\,h:D\ra[0,\infty)$ be nonnegative functions such that $\rho$, $h$ are twice continuously differentiable in $D\backslash\{0\}$.
\begin{itemize}
  \item[(A1)]  For each $i\in\S$, there exists a number $\beta_i\in \R$ such that
      \[L^{(i)}\rho(x)\leq \beta_ih(x),\quad \forall\,x\in D\backslash\{0\},\]
      and
      \[\lim_{x\ra 0}\frac{h(x)}{\rho(x)}=0,\quad \lim_{x\ra 0}\frac{L^{(i)}h(x)}{h(x)}=0.\]
  \item[(A2)] For each $i\in\S$, there exists a number $\bar{\beta}_i\in \R$ such that
      \[L^{(i)}\rho(x)\leq \bar\beta_i\rho(x),\quad \forall\, x\in D\backslash\{0\}.\]
\end{itemize}
Note that here $\beta_i$ and $\bar \beta_i$ are allowed to be positive or negative. When $\rho(x)$ vanishes only at $0$, the negativeness of $\beta_i$ and $\bar \beta_i$ ensures that the equilibrium point is stable in probability for the diffusion process associated with $(X_t)$ in the fixed environment $i$.

Next, let us introduce our first criterion for stability of \rsp based on  the Friedholm alternative, which can provide us some on-off type criteria for \rsp.
\begin{thm}\label{t-f}
Let $(X_t,\La_t)$ be a state-independent \rsp   satisfying (\ref{1.1}) (\ref{1.2}) with $N<\infty$. Assume (H2.1)-(H2.4) hold. Let $\rho,\,h\in C^2(D)$ be two nonnegative functions satisfying (A1). Let $\mu$ be the invariant probability measure of $(\La_t)$. Suppose that
\begin{equation}\label{con-1}
\sum_{i\in\S}\mu_i\beta_i<0.
\end{equation}
Then the equilibrium point $x=0$ is asymptotically stable in probability if $\rho(x)$ vanishes only at $0$, and is unstable in probability
if\, $\lim_{x\ra 0}\rho(x)=\infty$.
\end{thm}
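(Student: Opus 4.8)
The plan is to use the Fredholm alternative to solve a Poisson equation for the jump part $Q$, then construct a Lyapunov function of the form $V(x,i) = \rho(x)\big(1 + \gamma\, \xi_i\, w(x)\big)$ (or an additive modification $\rho(x) + \gamma\,\xi_i\,h(x)$), and verify $\mathscr{A}V \le 0$ near $0$ by exploiting the asymptotic relations in (A1) — namely $h/\rho \to 0$ and $L^{(i)}h/h \to 0$ as $x\to 0$. Once $\mathscr{A}V \le 0$ holds on $D\setminus\{0\}$, Lemma \ref{lem-2.2} gives asymptotic stability in probability when $\rho$ vanishes only at $0$ (so that condition (i) there holds, using that $N<\infty$), and Lemma \ref{lem-2.3} gives instability when $\rho(x)\to\infty$ as $x\to 0$ (condition (\ref{cond-2})).

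First I would record the consequence of (\ref{con-1}) via the Fredholm alternative for the finite, irreducible, conservative $Q$-matrix: since $\sum_i \mu_i\beta_i < 0$, and $\mu$ is the unique (up to scaling) left null vector of $Q$, the vector $\beta = (\beta_i)_{i\in\S}$ is not in general in the range of $Q$, but $\beta - \big(\sum_j\mu_j\beta_j\big)\mathbf{1}$ is orthogonal to $\mu$ and hence lies in the range of $Q$. Thus there exists $\xi = (\xi_i)_{i\in\S}$ solving $Q\xi = -\beta + \big(\sum_j \mu_j\beta_j\big)\mathbf{1}$, i.e.
\begin{equation*}
(Q\xi)_i + \beta_i = \sum_{j\in\S}\mu_j\beta_j =: -c < 0, \qquad i\in\S.
\end{equation*}
Because $N<\infty$, the $\xi_i$ are bounded; fix a constant $\gamma>0$ small enough (to be chosen). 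Set $V(x,i) = \rho(x) + \gamma\,\xi_i\,h(x)$ for $x\in D\setminus\{0\}$ — after possibly adding a constant to all $\xi_i$, which does not change $Q\xi$, we may assume $\xi_i > 0$ so that $V\ge 0$; shrinking $D$ if necessary, $\rho + \gamma\xi_i h$ still vanishes only at $0$ in the stable case because $h/\rho\to 0$ forces $V(x,i)/\rho(x)\to 1$.

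Next I would compute, for $x\in D\setminus\{0\}$,
\begin{equation*}
\mathscr{A}V(x,i) = L^{(i)}\rho(x) + \gamma\,\xi_i\, L^{(i)}h(x) + h(x)\,(Q\xi)_i \le \beta_i h(x) + \gamma\,\xi_i\,L^{(i)}h(x) + h(x)\,(Q\xi)_i,
\end{equation*}
using (A1) and the fact that $Q$ acts only on the discrete variable so $Q V(x,\cdot)(i) = h(x)(Q\xi)_i\gamma$ — here I would carry the factor $\gamma$ through carefully, defining instead $V(x,i)=\rho(x)+\gamma\xi_i h(x)$ with $Q$-term $\gamma h(x)(Q\xi)_i$; then $\mathscr A V(x,i)\le \beta_i h(x) + \gamma h(x)(Q\xi)_i + \gamma\xi_i L^{(i)}h(x)$. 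The first two terms combine, but only after rescaling: write $\beta_i h + \gamma h (Q\xi)_i$. To make the combination work I would instead take $\xi$ solving $Q\xi = -\tfrac{1}{\gamma}\beta + \tfrac{1}{\gamma}(\sum\mu_j\beta_j)\mathbf 1$, equivalently replace $\xi$ by $\xi/\gamma$, so that $\beta_i h + \gamma h(Q\xi)_i = -c\,h(x)$ with $c = -\sum_j\mu_j\beta_j>0$. Then
\begin{equation*}
\mathscr{A}V(x,i) \le -c\,h(x) + \xi_i\, L^{(i)}h(x) = h(x)\Big(-c + \xi_i\,\frac{L^{(i)}h(x)}{h(x)}\Big).
\end{equation*}
By the second limit in (A1), $L^{(i)}h(x)/h(x)\to 0$ as $x\to 0$, and since $\S$ is finite, $\max_i|\xi_i|<\infty$; hence for $D$ shrunk to a small enough ball, $|\xi_i|\,|L^{(i)}h(x)/h(x)| \le c/2$ for all $i$ and all $x\in D\setminus\{0\}$, giving $\mathscr{A}V(x,i)\le -\tfrac{c}{2}h(x)\le 0$. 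With $\gamma=1$ absorbed into $\xi$ and $D$ chosen after $\xi$ is fixed (legitimate since $\xi$ depends only on $Q$ and $\beta$, not on $D$), the verification is complete.

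Finally I would feed this into the two lemmas. In the stable case $\rho$ vanishes only at $0$; then $\inf_{i\in\S}V(x,i)\ge \rho(x) + (\min_i\xi_i\wedge 0)\cdot h(x)$ — using $h/\rho\to0$ and $\min_i\xi_i$ finite, shrink $D$ so that $V(x,i)\ge \tfrac12\rho(x)>0$ on $D\setminus\{0\}$, so condition (i) of Lemma \ref{lem-2.2} holds (here finiteness of $N$ matters), condition (ii) is $\mathscr{A}V\le 0$ just proved, and Lemma \ref{lem-2.2} yields asymptotic stability in probability. In the unstable case $\rho(x)\to\infty$ as $x\to 0$; since $h/\rho\to0$, also $V(x,i) = \rho(x)(1 + \xi_i h(x)/\rho(x))\to\infty$ for each $i$, so (\ref{cond-2}) holds, and with $V$ nonnegative (add a constant to make $\xi_i h + \rho \ge 0$ near $0$, using $\rho\to\infty$) and $\mathscr{A}V\le0$, Lemma \ref{lem-2.3} gives instability in probability. \textbf{The main obstacle} I anticipate is the bookkeeping around the scaling constant: one must solve the Poisson equation $Q\xi=-\beta+(\sum\mu_j\beta_j)\mathbf 1$ \emph{before} choosing the neighborhood $D$, so that the smallness of $D$ needed to dominate $\xi_i L^{(i)}h/h$ by $c/2$ is consistent; this is fine precisely because $\xi$ and $c$ depend only on the (finite, fixed) data $(Q,\beta)$, while the limits in (A1) are uniform over the finitely many $i\in\S$. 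A secondary point requiring care is ensuring $V\ge0$ and that $V$ still vanishes only at $0$ (stable case) after adding the $\xi_i h$ perturbation — both handled by shrinking $D$ and exploiting $\lim_{x\to0}h(x)/\rho(x)=0$.
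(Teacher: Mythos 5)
Your proposal is correct and follows essentially the same route as the paper: solve $Q\xi = -\beta + (\sum_j\mu_j\beta_j)\mathbf 1$ via the Fredholm alternative, set $V(x,i)=\rho(x)+\xi_i h(x)$, use the boundedness of $\xi$ (finiteness of $N$) together with $L^{(i)}h/h\to 0$ to get $\mathscr A V\le -\tfrac{c}{2}h$ on a shrunken neighborhood, use $h/\rho\to 0$ to verify nonnegativity and the vanishing/blow-up behavior of $V$, and conclude via Lemmas \ref{lem-2.2} and \ref{lem-2.3}. The extra bookkeeping with the scaling constant $\gamma$ (ultimately absorbed into $\xi$) and the optional shift of $\xi$ to positivity are harmless detours; the substance matches the paper's proof.
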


\begin{proof}
  Since $\sum_{i=1}^N\mu_i\beta_i<0$, according to the Friedholm alternative (cf. \cite{PP}), there exists a constant $c>0$ and a vector $\xi$ such that
  \[Q\xi(i)=-c-\beta_i\quad \text{for every}\ i\in \S.\]
  Set $V(x,i)=\rho(x)+\xi_i h(x)$ with the vector $\xi=(\xi_1,\ldots,\xi_N)^\ast$  determined above.  We have
  \begin{equation}\label{ine-2.1}
  \begin{split}
    \mathscr A\,V(x,i)&=L^{(i)}\rho(x)+Q\xi(i)h(x)+\xi_iL^{(i)}h(x)\\
             &\leq\Big(\beta_i+Q\xi_i+\xi_i\frac{L^{(i)}h(x)}{h(x)}\Big)h(x), \quad x\in D\backslash\{0\}.
  \end{split}
  \end{equation}
  By (\ref{ine-2.1}), we obtain
  \begin{equation}
    \label{ine-2.2}
    \mathscr A\,V(x,i)\leq \Big(-c+\xi_i\frac{L^{(i)}h(x)}{h(x)}\Big) h(x),\quad  x\in D\backslash\{0\}.
  \end{equation}
  Since $(\xi_i)_{i=1}^N$ is bounded due to the finiteness of $N$, and $\dis\lim_{x\ra 0}\frac{L^{(i)}h(x)}{h(x)}=0$, for some $0<\veps<\frac c 2$, there exists $\delta_1>0$ such that
  for any $x\in D\cap\{z; 0<|z|<\delta_1\}$, one has $\big|\xi_i\frac{L^{(i)}h(x)}{h(x)}\big|<\veps<\frac c 2$, and hence
  $\mathscr A\,V(x,i)\leq - \frac c2 h(x)<0$. Now we check the nonnegativity of $V(x,i)$.
  By (A1), we have $\lim_{x\ra 0}1+\xi_i\frac{h(x)}{\rho(x)}=1$, so there exists $\delta_2>0$ such that for any $x\in \{z;|z|\leq \delta_2\}\cap D$,
  \[1+\frac{h(x)}{\rho(x)}\geq \frac 12,\quad \text{and}\ V(x,i)=\rho(x)\Big(1+\xi_i\frac{h(x)}{\rho(x)}\Big)\geq \frac 12 \rho(x)\geq 0.
  \]
  This also implies that if $\lim_{x\ra 0} \rho(x)=\infty$, then $\lim_{x\ra 0}V(x,i)=\infty$ for every $i\in \S$. Moreover, by the definition of $V(x,i)$ and the condition $\lim_{x\ra 0}\frac{h(x)}{\rho(x)}=0$, it is easy to see that if $\rho(x)$ vanishes only at $0$, then $V(0,i)=\lim_{x\ra 0}V(x,i)=0$ and $V(x,i)$ vanishes only at $0$. Set $\bar D=D\cap\{z;|z|<\min\{\delta_1,\delta_2\}\}$ being a neighborhood of $0$, then
  \[\mathscr A\,V(x,i)\leq 0,\quad \forall\, (x,i)\in \bar D\backslash\{0\}\times \S.\]
  Applying Lemma \ref{lem-2.2} and Lemma \ref{lem-2.3}, we can conclude the proof.
\end{proof}

Next, we give out a criterion based on the theory of M-matrix.
In \cite{MY}, the theory of M-matrix has been used to study the stability of linear state-independent regime-switching diffusion processes. In this work, we generalize this method to deal with more general regime-switching diffusion processes. As an application of this method, an example of nonlinear regime-switching diffusion process in a countable state space is given in subsection 2.2.

Let us first introduce some basic facts on the theory of M-matrix.
Let $B$ be a matrix or vector. By $B\geq 0$ we mean that all elements of $B$ are nonnegative. By $B>0$ we mean that $B\geq 0$ and at least one element of $B$ is positive. By $B\gg 0$, we mean that all elements of $B$ are positive. $B\ll 0$ means that $-B\gg 0$.
\begin{defn}[M-matrix] A square matrix $A=(a_{ij})_{n\times n}$ is called an M-Matrix if $A$ can be expressed in the form $A=sI-B$ with some $B\geq 0$ and $s\geq\mathrm{Ria}(B)$, where $I $ is the $n\times n$ identity matrix, and $\mathrm{Ria}(B)$ the spectral radius of $B$. When $s>\mathrm{Ria}(B)$,   $A$ is called a nonsingular M-matrix.
\end{defn}
Below, we cite some equivalent conditions that $A$ is a nonsingular M-matrix and refer the reader to \cite{BP} for more equivalent conditions.
\begin{prop}[\cite{BP},\cite{MY}]\label{m-matrix}
The following statements are equivalent.
\begin{enumerate}
\item $A$ is a nonsingular $n\times n$ M-matrix.
\item All of the principal minors of $A$ are positive; that is,
$\begin{vmatrix} a_{11}\!&\!\ldots\!&\!a_{1k}\\ \vdots& &\vdots\\ a_{1k}&\ldots&a_{kk}\end{vmatrix}>0 \ \  \text{for every $k=1, \ldots,n$}.$
\item Every real eigenvalue of $A$ is positive.
\item $A$ is semipositive; that is, there exists $x\gg 0$ in $\R^n$ such that $Ax\gg0$.
\end{enumerate}
\end{prop}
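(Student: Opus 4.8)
This is classical M-matrix theory, recorded in \cite{BP}; I would establish the four equivalences as a cycle of implications, using two standard ingredients: the Perron–Frobenius theorem — the spectral radius $\mathrm{Ria}(B)$ of a nonnegative matrix $B$ is an eigenvalue of $B$, with a nonnegative eigenvector — and the Neumann expansion $(I-C)^{-1}=\sum_{k\ge 0}C^{k}\ge 0$, valid whenever $C\ge 0$ and $\mathrm{Ria}(C)<1$. One works throughout under the sign condition built into statement (1): $A$ has nonpositive off-diagonal entries (a $Z$-matrix), since $A=sI-B$ with $B\ge 0$; the other three statements are to be read under this same standing hypothesis, without which they are not equivalent.

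I would first prove (1)$\Leftrightarrow$(4). If $A=sI-B$ with $B\ge0$ and $s>\mathrm{Ria}(B)$, then $A^{-1}=s^{-1}\sum_{k\ge0}(B/s)^{k}\ge0$, and no row of $A^{-1}$ can vanish since $A^{-1}A=I$; hence for any $y\gg0$ the vector $x:=A^{-1}y$ satisfies $x\gg0$ and $Ax=y\gg0$, which is (4). Conversely, given $x\gg0$ with $Ax\gg0$, choose $s$ large enough that $B:=sI-A\ge0$; then $(Bx)_i=sx_i-(Ax)_i$, so with $s':=\max_i(Bx)_i/x_i$ one has $s'<s$ and $Bx\le s'x$ componentwise, and pairing with a nonnegative left Perron eigenvector of $B$ yields $\mathrm{Ria}(B)\le s'<s$, i.e.\ (1). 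Next, (1)$\Leftrightarrow$(3): $A$ and $sI-A$ commute, so the eigenvalues of $B=sI-A$ are exactly $s$ minus those of $A$; if $A=sI-B$ with $s>\mathrm{Ria}(B)$, a real eigenvalue $\mu$ of $A$ gives a real eigenvalue $s-\mu\le\mathrm{Ria}(B)<s$ of $B$, so $\mu>0$, and conversely, taking $B=sI-A\ge0$ for $s$ large and invoking Perron–Frobenius, $s-\mathrm{Ria}(B)$ is a real eigenvalue of $A$, hence positive by (3), so $\mathrm{Ria}(B)<s$.

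It remains to bring in (2). For (1)$\Rightarrow$(2), let $J\subseteq\{1,\dots,n\}$ and restrict the vector $x$ of (4) to the coordinates in $J$: for $i\in J$, $(A_{JJ}x_J)_i=(Ax)_i-\sum_{j\notin J}a_{ij}x_j\ge(Ax)_i>0$, each dropped term $-a_{ij}x_j\ge0$ because $a_{ij}\le0$ for $i\ne j$; thus $A_{JJ}$ is again a semipositive $Z$-matrix, hence a nonsingular M-matrix by (4)$\Rightarrow$(1), so its real eigenvalues are positive by (1)$\Rightarrow$(3) while its non-real eigenvalues come in conjugate pairs, giving $\det A_{JJ}>0$; specializing to leading submatrices yields (2). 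The reverse implication (2)$\Rightarrow$(1) is where I expect the main obstacle. Here I would use the $LU$-factorization made available by the nonvanishing of the leading principal minors, $A=LU$ with $L$ unit lower triangular and $U$ upper triangular with $u_{kk}=\det A_{[k]}/\det A_{[k-1]}>0$, combined with the structural fact that one step of Gaussian elimination on a $Z$-matrix with positive pivot $a_{11}>0$ produces a $Z$-matrix — the updated entry $a_{ij}-a_{i1}a_{1j}/a_{11}$ is $\le0$ for $i\ne j$ since $a_{i1}a_{1j}/a_{11}\ge0$ — whose Schur complement again has positive leading principal minors. An induction then shows $L$ and $U$ are triangular $Z$-matrices with positive diagonals, so $L^{-1},U^{-1}\ge0$ and $A^{-1}=U^{-1}L^{-1}\ge0$; finally, with $B:=sI-A\ge0$ for $s$ large and $v\ge0$ a Perron eigenvector of $B$, the identity $A^{-1}v=(s-\mathrm{Ria}(B))^{-1}v$ cannot have a negative scalar (else $A^{-1}v$ is a nonzero nonpositive vector while $A^{-1}\ge0$, $v\ge0$), forcing $\mathrm{Ria}(B)<s$ and hence (1). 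The delicate point throughout is exactly this last step: none of (2), (3), (4) individually carries the off-diagonal sign information — it must be supplied as a standing hypothesis — and moving from positivity of the leading principal minors to the spectral-radius bound genuinely requires the elimination/Schur-complement bookkeeping rather than a one-line eigenvalue estimate; a homotopy that deforms the off-diagonal part of $A$ to zero within the class of $Z$-matrices with positive leading principal minors is an alternative route.
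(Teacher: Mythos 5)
The paper does not prove this proposition at all: it is stated as a quotation from \cite{BP} and \cite{MY} (``we cite some equivalent conditions \dots and refer the reader to \cite{BP}''), so there is no in-paper argument to compare yours against. Your self-contained proof is correct and is essentially the standard Berman--Plemmons development: the Neumann series for $(sI-B)^{-1}$ gives $(1)\Rightarrow(4)$; the left Perron eigenvector pairing gives $(4)\Rightarrow(1)$ (note the step $\mathrm{Ria}(B)\,w^{\ast}x\le s'w^{\ast}x$ does require $w^{\ast}x>0$, which your hypotheses $w\ge0$, $w\neq0$, $x\gg0$ supply); the spectral shift handles $(1)\Leftrightarrow(3)$; and the $LU$/Schur-complement induction within the class of $Z$-matrices handles $(2)\Rightarrow(1)$. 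Two remarks worth keeping: first, your observation that the nonpositivity of the off-diagonal entries must be carried as a standing hypothesis is exactly right --- the proposition as printed is only true within the class of $Z$-matrices, which the paper leaves implicit; second, the displayed determinant in item (2) is the \emph{leading} principal minor (and its last row is misprinted as $a_{1k},\ldots,a_{kk}$ rather than $a_{k1},\ldots,a_{kk}$), and your proof correctly establishes the stronger statement that \emph{all} principal minors are positive while only using the leading ones for the converse, which is the right way to reconcile the two readings.
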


\begin{thm}\label{t-f-2}
 Let $(X_t,\La_t)$ be state-independent \rsp satisfying (\ref{1.1}) (\ref{1.2}) with $N<\infty$. Assume (H2.1)-(H2.4) hold. Let $\rho\in C^2(D)$ be a nonnegative function such that (A2) holds.  Suppose that
\begin{equation}
-\big(Q+\diag(\bar\beta_1,\ldots,\bar\beta_N)\big)\ \text{is a nonsingular M-matrix},
\end{equation}
where $\diag(\bar\beta_1,\ldots,\bar\beta_N)$ denotes the diagonal matrix generated by the vector $(\bar \beta_1,\ldots,\bar\beta_N)^\ast$ as usual.
Then the equilibrium point $x=0$ is asymptotically stable in probability if $\rho(x)$ vanishes only at $0$, and is unstable in probability
if\, $\lim_{x\ra 0}\rho(x)=\infty$.
\end{thm}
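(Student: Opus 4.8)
The plan is to mimic the proof of Theorem \ref{t-f}, replacing the Fredholm-alternative step by the M-matrix characterization in Proposition \ref{m-matrix}. Since $-\big(Q+\diag(\bar\beta_1,\ldots,\bar\beta_N)\big)$ is a nonsingular M-matrix, statement (4) of Proposition \ref{m-matrix} gives a vector $\xi=(\xi_1,\ldots,\xi_N)^\ast\gg 0$ in $\R^N$ with $-\big(Q+\diag(\bar\beta_1,\ldots,\bar\beta_N)\big)\xi\gg 0$, that is, there exists a constant $c>0$ such that
\[
Q\xi(i)+\bar\beta_i\xi_i\leq -c\quad\text{for every }i\in\S.
\]
This is the exact analogue of the identity $Q\xi(i)=-c-\beta_i$ used in Theorem \ref{t-f}, and it is the only place where the M-matrix hypothesis enters.

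Next I would set $V(x,i)=\xi_i\,\rho(x)$ and compute, for $x\in D\backslash\{0\}$,
\[
\mathscr A\,V(x,i)=\xi_i L^{(i)}\rho(x)+\big(Q\xi(i)\big)\rho(x)\leq \big(\xi_i\bar\beta_i+Q\xi(i)\big)\rho(x)\leq -c\,\rho(x)\leq 0,
\]
where the first inequality uses (A2) together with $\xi_i>0$. Since $\xi\gg 0$, $V(\cdot,i)$ is nonnegative on $D$; moreover $V(x,i)$ vanishes exactly where $\rho$ does (because $\xi_i>0$), so if $\rho$ vanishes only at $0$ then so does $\inf_{i\in\S}V(\cdot,i)$, verifying hypothesis (i) of Lemma \ref{lem-2.2}; and if $\lim_{x\ra 0}\rho(x)=\infty$ then $\lim_{x\ra 0}V(x,i)=\infty$ for each $i$, verifying \eqref{cond-2} of Lemma \ref{lem-2.3}. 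In either case \eqref{cond-1}/(ii) holds on the neighborhood $D\backslash\{0\}$ of $0$. Applying Lemma \ref{lem-2.2} in the first case and Lemma \ref{lem-2.3} in the second completes the proof.

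Unlike Theorem \ref{t-f}, there is no need to shrink the neighborhood $D$: because (A2) controls $L^{(i)}\rho$ by $\rho$ itself (rather than by an auxiliary function $h$ with $h/\rho\to 0$), the nonnegativity of $V$ and the sign of $\mathscr A\,V$ are immediate on all of $D\backslash\{0\}$, and no $\veps$-$\delta$ argument involving $L^{(i)}h/h$ is required. The main (and only real) obstacle is the translation of the nonsingular M-matrix hypothesis into the existence of the positive vector $\xi$ with $(Q+\diag(\bar\beta_i))\xi\ll 0$; this is precisely the content of the equivalence $(1)\Leftrightarrow(4)$ in Proposition \ref{m-matrix}, so once that is invoked the rest is a routine application of It\^o's formula packaged inside Lemmas \ref{lem-2.2} and \ref{lem-2.3}.
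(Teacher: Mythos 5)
Your proposal is correct and follows essentially the same route as the paper: invoke the semipositivity characterization of nonsingular M-matrices to obtain $\xi\gg 0$ with $(Q+\diag(\bar\beta_1,\ldots,\bar\beta_N))\xi\ll 0$, set $V(x,i)=\xi_i\rho(x)$, verify $\mathscr A\,V\leq 0$ via (A2), and conclude by Lemmas \ref{lem-2.2} and \ref{lem-2.3}. Your added remarks (writing the conclusion as a uniform bound $-c$ and noting that no shrinking of $D$ is needed) are accurate but do not change the argument.
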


\begin{proof}
As $-(Q+\diag(\bar\beta_1,\ldots,\bar\beta_N))$ is a nonsingular M-matrix, there exists a vector $\xi=(\xi_1,\ldots,\xi_N)^\ast\gg 0$ such that
\[\lambda=(\lambda_1,\ldots,\lambda_N)^\ast:=-(Q+\diag(\bar\beta_1,\ldots,\bar\beta_N))\xi\gg 0.\]
Set $V(x,i)=\xi_i \rho(x)$ for $x\in D\backslash \{0\}$ and $i\in\S$, then
\begin{align*}
\mathscr A\,V(x,i)&=Q\xi(i)\rho(x)+\xi_iL^{(i)}\rho(x)\\
     &\leq (Q\xi(i)+\bar\beta_i\xi_i)\rho(x)=-\lambda_i\rho(x)\leq 0.
\end{align*}
Applying Lemma \ref{lem-2.2} and Lemma \ref{lem-2.3}, the desired conclusion follows immediately.
\end{proof}

Now we apply our criterion to study the stability of a class of nonlinear \textbf{RSDP}.  Let $(X_t,\La_t)$ satisfy (\ref{1.1}) and (\ref{1.2}). We assume further that
\begin{equation}\label{con-2}
b(x,i)=|x|^\gamma\hat b(x/|x|,i)(1+o(1)),\quad \sigma(x,i)=|x|^\zeta\hat\sigma(x/|x|,i)(1+o(1))
\end{equation}
as $x\ra 0$, where $\hat b(\cdot,\cdot):S^{d-1}\times \S\ra \R^d$, $\hat \sigma(\cdot,\cdot):S^{d-1}\times \S\ra \R^{d\times d}$, $\hat b$ and $\hat \sigma$ are continuous, $1<\gamma\leq 2\zeta-1$ and $S^{d-1}$ denotes the unit sphere in $\R^d$.
We define some quantities used later. Denote by $\theta=(\theta_1,\ldots,\theta_d)^\ast$ a point in $S^{d-1}$. Denote by $\delta_k(\cdot)$ the Dirac measure at $k$. Put $\hat a(\theta,i)=\hat\sigma(\theta,i)\hat\sigma(\theta,i)^\ast$.
For each $i\in \S$, set
\begin{equation}\label{beta-1}
\beta_i=\left\{\begin{array}{ll} \sup\limits_{\theta\in S^{d-1}}\sum_{k=1}^N \hat b_k(\theta,i)\theta_k,& \text{if}\ \gamma<2\zeta-1,\\
\sup\limits_{\theta\in S^{d-1}}\Big[\frac 12 \sum_{k,l=1}^N\hat a_{kl}(\theta,i)\big(\delta_{k}(l)-2\theta_k\theta_l\big)+\sum_{k=1}^N \hat b_k(\theta,i)\theta_k\Big],&\text{if}\ \gamma=2\zeta-1,\end{array}\right.
\end{equation}
and
\begin{equation}\label{beta-2}
\tilde \beta_i=\left\{\begin{array}{ll} \inf\limits_{\theta\in S^{d-1}}\sum_{k=1}^N \hat b_k(\theta,i)\theta_k,& \text{if}\ \gamma<2\zeta-1,\\
\inf\limits_{\theta\in S^{d-1}}\Big[\frac 12 \sum_{k,l=1}^N\hat a_{kl}(\theta,i)\big(\delta_{k}(l)-2\theta_k\theta_l\big)+\sum_{k=1}^N \hat b_k(\theta,i)\theta_k\Big],&\text{if}\ \gamma=2\zeta-1.\end{array}\right.
\end{equation}
\begin{thm}\label{t-nonlinear-1}
Assume (H2.1)-(H2.4) hold. Suppose that $(X_t,\La_t)$ satisfies (\ref{1.1}) (\ref{1.2}) with coefficients satisfying (\ref{con-2}) and $1<\gamma\leq 2\zeta-1$. Let $(\mu_i)$ be the invariant probability measure of $(\La_t)$.
The equilibrium point $x=0$ is asymptotically stable in probability if\, $\sum_{i=1}^N\mu_i \beta_i<0$, and is unstable in probability if\, $\sum_{i=1}^N\mu_i\tilde \beta_i>0$.
\end{thm}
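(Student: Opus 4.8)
The plan is to deduce Theorem~\ref{t-nonlinear-1} from Theorem~\ref{t-f} by taking the comparison functions $\rho$ and $h$ to be powers of $|x|$. For the stability assertion I would fix a small $p>0$ and set $\rho(x)=|x|^p$, $h(x)=p|x|^{\gamma+p-1}$; for the instability assertion I would set $\rho(x)=|x|^{-p}$ (so that $\lim_{x\ra0}\rho(x)=\infty$, as required by the second half of Theorem~\ref{t-f}) and $h(x)=p|x|^{\gamma-p-1}$. Since $1<\gamma\le2\zeta-1$ forces $\zeta>1$, for $p$ small enough every exponent occurring below is positive where it must be, and $\rho,h$ are $C^\infty$ on $D\setminus\{0\}$, nonnegative, with $\rho$ vanishing only at $0$ (resp.\ blowing up at $0$).

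The computational core is the behaviour of $L^{(i)}|x|^s$ near $0$. Using $\partial_k|x|^s=s|x|^{s-2}x_k$ and $\partial_k\partial_l|x|^s=s|x|^{s-2}\big(\delta_{kl}+(s-2)\theta_k\theta_l\big)$ with $\theta=x/|x|$, together with (\ref{con-2}) (which gives $a_{kl}(x,i)=|x|^{2\zeta}\hat a_{kl}(\theta,i)(1+o(1))$ and $b_k(x,i)=|x|^{\gamma}\hat b_k(\theta,i)(1+o(1))$), one obtains
\[L^{(i)}|x|^s=\tfrac{s}{2}|x|^{2\zeta+s-2}\sum_{k,l}\hat a_{kl}(\theta,i)\big(\delta_k(l)+(s-2)\theta_k\theta_l\big)(1+o(1))+s|x|^{\gamma+s-1}\sum_{k}\hat b_k(\theta,i)\theta_k(1+o(1)),\]
where the $o(1)$ is uniform in $\theta\in S^{d-1}$ because $\hat a,\hat b$ are continuous on the compact sphere. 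Since $\gamma+s-1\le2\zeta+s-2$ the drift term dominates (strictly when $\gamma<2\zeta-1$, at the same order when $\gamma=2\zeta-1$); and the Hessian coefficient $\delta_k(l)+(s-2)\theta_k\theta_l$ differs from the coefficient $\delta_k(l)-2\theta_k\theta_l$ used in (\ref{beta-1})--(\ref{beta-2}) by $s\,\theta_k\theta_l$, contributing $\tfrac{s}{2}\theta^\ast\hat a(\theta,i)\theta$. Dividing by $|x|^{\gamma+s-1}$ and letting $x\ra0$, one gets on a small enough punctured ball, for any $\veps>0$,
\[L^{(i)}|x|^{p}\le p\big(\beta_i+\tfrac{p}{2}\sup_{\theta}\theta^\ast\hat a(\theta,i)\theta+\veps\big)|x|^{\gamma+p-1},\qquad L^{(i)}|x|^{-p}\le p\big(-\tilde\beta_i+\tfrac{p}{2}\sup_{\theta}\theta^\ast\hat a(\theta,i)\theta+\veps\big)|x|^{\gamma-p-1},\]
the term $\tfrac{p}{2}\sup_\theta\theta^\ast\hat a(\theta,i)\theta$ being genuinely needed only when $\gamma=2\zeta-1$ (for $\gamma<2\zeta-1$ the second-order part is of lower order and disappears) but harmless to keep since it is $\ge0$. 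With $h$ chosen as above this is exactly the first part of (A1), with respective constants $\beta_i+\tfrac p2\sup_\theta\theta^\ast\hat a(\theta,i)\theta+\veps$ and $-\tilde\beta_i+\tfrac p2\sup_\theta\theta^\ast\hat a(\theta,i)\theta+\veps$; the remaining requirements $h(x)/\rho(x)=p|x|^{\gamma-1}\ra0$ and $L^{(i)}h(x)/h(x)=O(|x|^{\gamma-1})+O(|x|^{2\zeta-2})\ra0$ follow from the same expansion applied to $h$.

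It then remains to verify the hypothesis of Theorem~\ref{t-f} that the $\mu$-average of the $(\mathrm{A1})$-constants is negative. For stability this reads $\sum_i\mu_i\big(\beta_i+\tfrac{p}{2}\sup_\theta\theta^\ast\hat a(\theta,i)\theta\big)+\veps<0$; since $N<\infty$ makes $\sum_i\mu_i\sup_\theta\theta^\ast\hat a(\theta,i)\theta$ finite, this holds for $p,\veps$ small once $\sum_i\mu_i\beta_i<0$. For instability it reads $-\sum_i\mu_i\tilde\beta_i+\tfrac{p}{2}\sum_i\mu_i\sup_\theta\theta^\ast\hat a(\theta,i)\theta+\veps<0$, which holds for $p,\veps$ small once $\sum_i\mu_i\tilde\beta_i>0$. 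Shrinking $D$ to the ball on which the pointwise estimates hold (still a neighbourhood of $0$, as only finitely many $i$ are involved), Theorem~\ref{t-f} applies: with $\rho(x)=|x|^p$ vanishing only at $0$ it gives asymptotic stability in probability, and with $\rho(x)=|x|^{-p}\ra\infty$ it gives instability in probability.

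The main obstacle is the borderline exponent $\gamma=2\zeta-1$: there the second-order part of $L^{(i)}$ has the same order as the drift part and cannot be discarded, so one must relate the ``wrong'' Hessian coefficient of $|x|^s$ to $\delta_k(l)-2\theta_k\theta_l$ in (\ref{beta-1})--(\ref{beta-2}); their difference is an order-$s$ quadratic form that must be absorbed by taking $|s|=p$ small, which is precisely why $p$ cannot be fixed in advance but only after $\mu$, $\hat a$, $\beta_i$, $\tilde\beta_i$ are known. A minor but essential point is the uniformity in $\theta$ of the $o(1)$ in (\ref{con-2}), without which the directional estimates could not be upgraded to a genuine pointwise bound on a punctured neighbourhood of $0$.
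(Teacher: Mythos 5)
Your proposal is correct and follows essentially the same route as the paper: take $\rho(x)=|x|^{\pm p}$ and $h(x)\asymp|x|^{\gamma\pm p-1}$, expand $L^{(i)}|x|^{s}$ via (\ref{con-2}), absorb the extra $s\,\theta_k\theta_l$ Hessian term by taking $p$ small in the borderline case $\gamma=2\zeta-1$, and invoke Theorem~\ref{t-f}. The only (harmless) cosmetic differences are your normalizing factor $p$ in $h$ and your making explicit the $\tfrac p2\sup_\theta\theta^\ast\hat a(\theta,i)\theta$ correction that the paper handles implicitly through the choice of $p_0$.
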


\begin{proof}
 Let
  \[\tilde L^{(i)}=\frac12 |x|^{2\zeta}\sum_{k,l=1}^N\hat a_{kl}(x/|x|,i)\frac{\partial^2}{\partial x_k\partial x_l}+ |x|^\gamma\sum_{k=1}^N\hat b_k(x/|x|,i)\frac{\partial}{\partial x_k}.  \]

 (1)\ To prove that $x=0$ is stable in probability, we choose $\rho(x)= |x|^p \ (p>0)$ and $h(x)=|x|^{\gamma+p-1} $.
 Then
 \begin{align*}
   \tilde L^{(i)} \rho(x)&= \frac p2|x|^{2\zeta+p-2}\sum_{k,l=1}^N\hat a_{kl}(\theta,i)\big( (p-2) \theta_k\theta_l+\delta_k(l)\big) +p|x|^{\gamma+p-1}
   \Big[\sum_{k=1}^N\hat b_k(\theta,i)\theta_k\Big] \\
   &=p\Big[\frac{ |x|^{2\zeta-1-\gamma}}{2}\sum_{k,l=1}^N\hat a_{kl}\big(\theta,i\big)\big((p-2)\theta_k\theta_l +\delta_k(l) \big)+\sum_{k=1}^N\hat b_k(\theta,i)\theta_k\Big]h(x),
 \end{align*}
  where $\theta=(\theta_1,\ldots,\theta_N)^\ast=x/|x|$.
  By direct calculation, one gets
  \[\lim_{x\ra 0} \frac{h(x)}{\rho(x)}=0 \quad \text{and}\quad \lim_{x\ra 0}\frac{L^{(i)} h(x)}{h(x)}=0.\]
  If $\gamma<2\zeta-1$, then for any $\veps >0$ we can choose a $\delta_1>0$   so that for any $x\in\{z;|z|<\delta_1\}$,
  \[p\Big[\frac{ |x|^{2\zeta-1-\gamma}}{2}\sum_{k,l=1}^N\hat a_{kl}\big(\theta,i\big)\big((p-2)\theta_k\theta_l +\delta_k(l) \big)+\sum_{k=1}^N\hat   b_k(\theta,i)\theta_k\Big]\leq p\Big(\sum_{k=1}^N\hat b_k(\theta,i)\theta_k+\veps\Big).\]
  If $\gamma=2\zeta-1$, then
  for any $\veps>0$, there exists $p_0>0$ such that for any $0<p<p_0$,
  \[p\Big[\frac{ |x|^{2\zeta-1-\gamma}}{2}\sum_{k,l=1}^N\hat a_{kl}\big(\theta,i\big)\big((p-2)\theta_k\theta_l +\delta_k(l) \big)+\sum_{k=1}^N\hat   b_k(\theta,i)\theta_k\Big]\leq p(\beta_i+\veps).\]
  Invoking the condition (\ref{con-2}), by choosing a sufficiently small $\delta_1$ and $p_0$, we have
  \[L^{(i)}\rho(x)\leq p(\beta_i+\veps)h(x).\]
  By the arbitrariness of $\veps>0$, by Theorem \ref{t-f}, we obtain  that $x=0$ is asymptotically stable in probability if $\sum_{i=1}^N\mu_i\beta_i<0$.

  (2)\ We go to study the instability. Now let $\rho(x)=|x|^{-p}$ \ $(p>0)$ for $x\neq 0$ and $h(x)=|x|^{\gamma-p-1}$. Then it hold $\lim_{x\ra 0} \frac{h(x)}{\rho(x)}=0$ and $\lim_{x\ra 0}\frac{L^{(i)} h(x)}{h(x)}=0$. Analogous to the discussion in part (1), through choosing a small neighborhood of $0$ for $x$ or a small value $p$, we obtain for any $\veps>0$
  \begin{align*}
    \tilde L^{(i)}\rho(x)&=p\Big[\frac{|x|^{2\zeta-1-\gamma}}{2}\sum_{k,l=1}^N\hat a_{kl}(\theta,i)\big((p+2)\theta_k\theta_l-\delta_k(l)\big)-\sum_{k=1}^{N}\hat b_k(\theta,i)\theta_k\Big]h(x)\\
    &\leq -p(\tilde \beta_i+\veps)h(x).
  \end{align*}
  Therefore, if $\sum_{i=1}^N\mu_i\tilde \beta_i>0$, then $x=0$ is unstable in probability by Theorem \ref{t-f}.
\end{proof}

Theorem \ref{t-nonlinear-1} shows that our on-off criterion provided by Theorem \ref{t-f} could be very sharp. We can see it more clearly from the nonlinear systems in the 1-dimensional space.
Let
\begin{equation}\label{1-dim}
  \d X_t=b_{\La_t}\big(|X_t|^\gamma\wedge |X_t|\big)\d t+ \sigma_{\La_t}\big(|X_t|^\zeta \wedge |X_t|\big)\d B_t,  \quad \text{in}\ \R,
\end{equation} and $(\La_t)$ is still a continuous time Markov chain on $\S$ with $N<\infty$.
Applying Theorem \ref{t-nonlinear-1} to this situation, we can obtain immediately
\begin{cor}
  \label{cor-1} Suppose $(X_t,\La_t)$ satisfy  (\ref{1-dim}). Assume $1<\gamma\leq 2\zeta-1$.
  Let $\beta_i=b_i$ if $\gamma<2\zeta-1$ and $\beta_i=b_i-\frac 12 \sigma_i^2$ if $\gamma=2\zeta-1$. Then the equilibrium $x=0$ is asymptotically stable in probability if \,$\sum_{i=1}^N \mu_i\beta_i<0$ and is unstable in probability if \,$\sum_{i=1}^N\mu_i\beta_i>0$.
\end{cor}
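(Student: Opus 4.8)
The plan is to derive Corollary \ref{cor-1} as the one-dimensional specialization of Theorem \ref{t-nonlinear-1}, so almost all the work is already done; it remains to check that (\ref{1-dim}) fits the template (\ref{con-2}) near $0$ and to evaluate the quantities $\beta_i,\tilde\beta_i$ in that case. First I would observe that the truncation by $|X_t|$ is inert near the origin: from $1<\gamma\le 2\zeta-1$ one gets $\zeta>1$, hence for $|x|$ small $|x|^\gamma\wedge|x|=|x|^\gamma$ and $|x|^\zeta\wedge|x|=|x|^\zeta$. Thus on a neighbourhood of $0$ the coefficients of (\ref{1-dim}) have the form (\ref{con-2}) with $d=1$, the error factors $(1+o(1))$ being identically $1$, $\hat a_{11}(\theta,i)=\sigma_i^2$, and $\sum_{k=1}^{1}\hat b_k(\theta,i)\theta_k=b_i$ for every $\theta\in S^{d-1}=S^0=\{-1,+1\}$; the maps $\hat b,\hat\sigma$ defined on the two-point set $S^0$ are trivially continuous.

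Next I would record that the standing hypotheses (H2.1)--(H2.4) hold for (\ref{1-dim}). Since $\S$ is finite and $Q$ is $x$-independent, irreducible and conservative, (H2.1) is immediate. One has $b_i\cdot0=\sigma_i\cdot0=0$, and for $r_0<1$ and $\veps<|x|<r_0$ one has $a(x,i)=\sigma_i^2|x|^{2\zeta}\ge(\min_i\sigma_i^2)\veps^{2\zeta}$, so (H2.2) holds provided $\sigma_i\ne0$ for every $i$ (which is in any case required for $0$ to be the only equilibrium). The maps $x\mapsto|x|^\gamma\wedge|x|$ and $x\mapsto(|x|^\zeta\wedge|x|)^2$ have linear growth and, being minima of locally Lipschitz functions (using $\gamma,\zeta\ge1$), are locally Lipschitz, so (H2.3) and (H2.4) follow.

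The only genuine computation is the evaluation of $\beta_i$ and $\tilde\beta_i$ from (\ref{beta-1})--(\ref{beta-2}). In dimension one $S^{d-1}=S^0$ has two elements and the bracketed expressions there do not depend on $\theta$: $\sum_{k=1}^{1}\hat b_k(\theta,i)\theta_k=b_i$, and $\frac12\sum_{k,l=1}^{1}\hat a_{kl}(\theta,i)(\delta_k(l)-2\theta_k\theta_l)=\frac12\sigma_i^2(1-2)=-\frac12\sigma_i^2$. Consequently $\beta_i=\tilde\beta_i=b_i$ when $\gamma<2\zeta-1$, and $\beta_i=\tilde\beta_i=b_i-\frac12\sigma_i^2$ when $\gamma=2\zeta-1$; in either case the common value is exactly the $\beta_i$ appearing in the corollary. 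Plugging this into Theorem \ref{t-nonlinear-1} (with $N<\infty$ and $(\mu_i)$ the invariant probability of $(\La_t)$) gives that $x=0$ is asymptotically stable in probability when $\sum_{i=1}^{N}\mu_i\beta_i<0$ and unstable in probability when $\sum_{i=1}^{N}\mu_i\beta_i>0$, which is the assertion.

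I do not anticipate a real obstacle here, since the whole content is carried by Theorem \ref{t-nonlinear-1}; the only points that call for a little attention are the verification that the $|X_t|$-truncation leaves the behaviour at $0$ unchanged and the observation that the supremum and the infimum over the two-point sphere $S^0$ collapse to the single number $\beta_i$, so that the hypotheses $\sum_i\mu_i\beta_i<0$ and $\sum_i\mu_i\tilde\beta_i>0$ of Theorem \ref{t-nonlinear-1} reduce to the stated $\sum_i\mu_i\beta_i<0$ and $\sum_i\mu_i\beta_i>0$.
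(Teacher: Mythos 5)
Your overall route is exactly the paper's: the paper derives Corollary \ref{cor-1} by the single sentence ``Applying Theorem \ref{t-nonlinear-1} to this situation, we can obtain immediately,'' so the only real content of a proof is the identification of (\ref{1-dim}) with the template (\ref{con-2}) and the evaluation of (\ref{beta-1})--(\ref{beta-2}). Your observations that the truncation by $|X_t|$ is inert near $0$ (since $\gamma,\zeta>1$) and that (H2.2) forces $\sigma_i\neq 0$ are correct and worth making explicit.

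However, the one computation that actually carries the proof is wrong as you state it. From $b(x,i)=b_i\big(|x|^\gamma\wedge|x|\big)=b_i|x|^\gamma$ near $0$ and the normalization $b(x,i)=|x|^\gamma\hat b(x/|x|,i)$ in (\ref{con-2}), one gets $\hat b(\theta,i)=b_i$ for both $\theta=\pm1$, hence
\[
\sum_{k=1}^{1}\hat b_k(\theta,i)\theta_k=b_i\theta,
\]
which equals $b_i$ at $\theta=+1$ but $-b_i$ at $\theta=-1$; it is not independent of $\theta$ as you claim. Consequently (\ref{beta-1}) and (\ref{beta-2}) give $\beta_i=|b_i|$ and $\tilde\beta_i=-|b_i|$ (with the additional $-\frac12\sigma_i^2$ when $\gamma=2\zeta-1$), so the hypotheses $\sum_i\mu_i\beta_i<0$ and $\sum_i\mu_i\tilde\beta_i>0$ of Theorem \ref{t-nonlinear-1} can never be satisfied and the corollary does not follow. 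This reflects a genuine feature of the equation as displayed: the drift $b_i\big(|x|^\gamma\wedge|x|\big)$ is even in $x$, so for instance with $b_i\equiv-1$ and $\sigma_i\equiv 0$ the flow $\dot x=-(x^2\wedge|x|)$ leaves $0$ unstable from the left. The corollary holds under the odd-drift reading $b(x,i)=b_i\,\mathrm{sgn}(x)\big(|x|^\gamma\wedge|x|\big)$, for which $\hat b(\theta,i)=b_i\theta$ and $\hat b_1(\theta,i)\theta_1=b_i\theta^2=b_i$ for both points of $S^0$, recovering exactly the values of $\beta_i$ and $\tilde\beta_i$ you wrote down. You need either to adopt and state that reading explicitly, or to check the $\theta=-1$ branch separately; as written, the assertion ``$\sum_{k}\hat b_k(\theta,i)\theta_k=b_i$ for every $\theta\in S^0$'' fails at $\theta=-1$.
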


\subsection{State-dependent \rsp in a countable state space}
In this subsection, we consider the stability of state-dependent \rsp in a countable state space. Based on our results for state-independent \rsp in a finite state space, we shall put forward a finite partition method to transform the \rsp in a countable state space into a new \rsp in a finite state space.  So in this subsection $(X_t,\La_t)$ still satisfies (\ref{1.1}) (\ref{1.2}) but with $N=\infty$ and $Q$-matrix $(q_{ij}(x))$ of $(\La_t)$ depending on $x$.

Let $\rho\in C^2(D)$ be a nonnegative function such that (A2) holds. As $N=\infty$, in this subsection we need to assume $M:=\sup_{i\in\S}\bar \beta_i<\infty$.
We first divide the space $\S$ into finite number of subsets according to the stability of $(X_t)$ in each fixed environment.
Precisely, let
\[\Gamma=\{-\infty=k_0<k_1<\ldots<k_{m-1}<k_m=M\}\]
be a finite partition of $(-\infty,M]$. Corresponding to $\Gamma$, there exists a finite partition $F=\{F_1,\ldots,F_m\}$ of $\S$ defined by
\begin{equation}
  \label{partition}
  F_i=\{j\in\S;\  \bar \beta_j\in (k_{i-1},k_i]\},\quad i=1,2,\ldots,m.
\end{equation}
We assume that each $F_i$ is nonempty, otherwise, we can delete some points in the partition $\Gamma$. Set
\begin{equation}
  \label{beta-F}\beta_i^F=\sup_{j\in F_i}\bar \beta_j,\quad q_{ii}^F=-\sum_{k\neq i}q_{ik}^F,
\end{equation}
\begin{equation}
  \label{q-F}
  q_{ik}^F=\left\{\begin{array}{ll}\sup_{x\in \R^d}\sup_{r\in F_i}\sum_{j\in F_k}q_{rj}(x), &\text{if}\ k<i,\\ \inf_{x\in\R^d}\inf_{r\in F_i}\sum_{j\in F_k}q_{rj}(x),&\text{if}\ k>i, \end{array}\right.
\end{equation} for $i,\,k\in \S$.
In order to ensure $q_{i}^F=-q_{ii}^F<\infty$ for $i=1,\ldots m$, we assume that there exists a number $\bar M$ such \begin{equation}\label{bound-q}
\sup_{x\in \R_d}\sup_{i\in \S} q_i(x)<\bar M<\infty.
\end{equation}
Then it is easy to see
\[\bar\beta_j\leq \beta_i^F,\ \forall\,j\in F_i,\ \text{and}\ \beta_{i-1}^F<\beta_i^F,\quad i=1,\ldots,m.\]
\begin{thm}\label{t-infi}
  \label{t-infi} Let $(X_t,\La_t)$ be a state-dependent \rsp in an infinite state space satisfying (\ref{1.1}) (\ref{1.2}). Assume that (H2.1)-(H2.4) and (\ref{bound-q}) hold. Let $\rho\in C^2(D)$ be a nonnegative function such that (A2) holds and $M=\sup_{i\in\S}\bar \beta_i<\infty$. Suppose that the $m\!\times\!m$ matrix $-\big(\diag(\beta_1^F,\ldots,\beta_m^F)+Q^F\big) H_m$ is a nonsingular M-matrix, where
  \begin{equation}\label{h-matrix}
H_m=\begin{pmatrix}
1&1&1&\cdots&1\\
0&1&1&\cdots&1\\
0&0&1&\cdots&1\\
\vdots&\vdots&\vdots&\cdots&\vdots\\
0&0&0&\cdots&1
\end{pmatrix}_{m\times m}.
\end{equation}
Then the equilibrium $x=0$ is asymptotically stable in probability if $\rho(x)$ vanishes only at $0$, and is unstable in probability if $\lim_{x\ra 0}\rho(x)=\infty$.
\end{thm}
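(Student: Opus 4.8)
The plan is to imitate the proof of Theorem \ref{t-f-2}: build a Lyapunov function of product form $V(x,i)=\xi_i\rho(x)$, but — since $\S$ is now infinite — force the weight $\xi_i$ to depend only on which block $F_l$ of the partition $F$ contains $i$, so that only finitely many distinct values occur. Concretely, since $-\big(\diag(\beta_1^F,\ldots,\beta_m^F)+Q^F\big)H_m$ is a nonsingular M-matrix, Proposition \ref{m-matrix} (item 4, semipositivity) furnishes a vector $\zeta=(\zeta_1,\ldots,\zeta_m)^\ast\gg0$ with
\[\lambda:=(\lambda_1,\ldots,\lambda_m)^\ast=-\big(\diag(\beta_1^F,\ldots,\beta_m^F)+Q^F\big)H_m\zeta\gg0.\]
I would then set $\eta:=H_m\zeta$, so $\eta_l=\zeta_l+\zeta_{l+1}+\cdots+\zeta_m$ and hence $\eta_1>\eta_2>\cdots>\eta_m=\zeta_m>0$; that is, $\eta$ is strictly decreasing and bounded away from $0$. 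Finally put $\xi_i:=\eta_l$ for $i\in F_l$ and $V(x,i):=\xi_i\rho(x)$ on $D\backslash\{0\}$. Since $\xi$ takes only the $m$ positive values $\eta_1,\ldots,\eta_m$, each $V(\cdot,i)$ is nonnegative and twice continuously differentiable on $D\backslash\{0\}$, and $\inf_{i\in\S}V(x,i)=\eta_m\rho(x)$.

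The main computation is to verify $\mathscr A\,V(x,i)\leq0$ for all $x\in D\backslash\{0\}$, $i\in\S$. Fix $i\in F_s$. Because $\xi$ equals the constant $\eta_l$ on each $F_l$, while $\xi_i=\eta_s>0$, and using (A2) together with $\bar\beta_i\leq\beta_s^F$,
\[\mathscr A\,V(x,i)=\rho(x)\sum_{l\neq s}\Big(\sum_{j\in F_l}q_{ij}(x)\Big)(\eta_l-\eta_s)+\eta_sL^{(i)}\rho(x)\leq\Big(\sum_{l\neq s}\Big(\sum_{j\in F_l}q_{ij}(x)\Big)(\eta_l-\eta_s)+\beta_s^F\eta_s\Big)\rho(x).\]
Here the monotonicity of $\eta$ does the work: when $l<s$ the factor $\eta_l-\eta_s$ is positive, so $\sum_{j\in F_l}q_{ij}(x)$ may be bounded above by $q_{sl}^F$ (the supremum over $x$ and $r\in F_s$ of $\sum_{j\in F_l}q_{rj}(x)$); when $l>s$ the factor is negative, so $\sum_{j\in F_l}q_{ij}(x)$ may be bounded below by $q_{sl}^F$ (the corresponding infimum); in both cases $\big(\sum_{j\in F_l}q_{ij}(x)\big)(\eta_l-\eta_s)\leq q_{sl}^F(\eta_l-\eta_s)$. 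Summing over $l\neq s$ and using $q_{ss}^F=-\sum_{l\neq s}q_{sl}^F$ gives $\sum_{l\neq s}\big(\sum_{j\in F_l}q_{ij}(x)\big)(\eta_l-\eta_s)\leq\sum_{l=1}^mq_{sl}^F\eta_l$, hence
\[\mathscr A\,V(x,i)\leq\Big[\big(\diag(\beta_1^F,\ldots,\beta_m^F)+Q^F\big)\eta\Big]_s\rho(x)=-\lambda_s\rho(x)\leq0.\]

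It then remains to invoke the abstract criteria. If $\rho(x)$ vanishes only at $0$, then $\inf_{i\in\S}V(x,i)=\eta_m\rho(x)$ vanishes only at $0$ and each $V(\cdot,i)$ is continuous on $D$, so hypotheses (i) and (ii) of Lemma \ref{lem-2.2} hold and $x=0$ is asymptotically stable in probability. If $\lim_{x\ra0}\rho(x)=\infty$, then $\lim_{x\ra0}V(x,i)=\xi_i\lim_{x\ra0}\rho(x)=\infty$ for each $i\in\S$, so (\ref{cond-1})–(\ref{cond-2}) of Lemma \ref{lem-2.3} hold and $x=0$ is unstable in probability.

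The only genuinely non-routine step is the passage $\sum_{l\neq s}\big(\sum_{j\in F_l}q_{ij}(x)\big)(\eta_l-\eta_s)\leq(Q^F\eta)_s$: the point is that insisting $\xi$ be block-constant and \emph{monotone} across blocks — precisely what multiplying an arbitrary positive $\zeta$ by $H_m$ achieves — fixes the sign of each increment $\eta_l-\eta_s$ in advance, so one can absorb both the state-dependence of $(q_{ij}(x))$ and the infiniteness of $\S$ into the finite surrogate $Q$-matrix $Q^F$, using the worst-case supremum for downward block transitions (which raise $V$) and the best-case infimum for the upward ones (where the negative weight reverses the inequality). Along the way one should note that (H2.1) makes $\sum_{j\neq i}q_{ij}(x)(\xi_j-\xi_i)$ a finite sum for every fixed $i,x$, and that (\ref{bound-q}) ensures each $q_i^F<\infty$, so that $\big(\diag(\beta_1^F,\ldots,\beta_m^F)+Q^F\big)H_m$ is a bona fide $m\times m$ matrix to which Proposition \ref{m-matrix} applies.
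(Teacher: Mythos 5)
Your proposal is correct and follows essentially the same route as the paper's own proof: semipositivity of the nonsingular M-matrix gives a positive vector, multiplication by $H_m$ produces a strictly decreasing positive block-weight vector, and the sign of each increment $\eta_l-\eta_s$ lets you dominate the state-dependent block transition rates by the sup/inf entries of $Q^F$, after which Lemmas \ref{lem-2.2} and \ref{lem-2.3} finish the argument. The only differences are notational (your $\zeta,\eta$ versus the paper's $\eta^F,\xi^F$).
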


\begin{proof}
Since $-(Q^F+\diag(\beta_1^F,\ldots,\beta_m^F))H_m$ is a nonsingular M-matrix, by Proposition \ref{m-matrix}, there exists a vector $\eta^F=(\eta_1^F,\ldots,\eta_m^F)^\ast\gg 0$ such that
\[\lambda^F=(\lambda_1^F,\ldots,\lambda_m^F)^\ast
:=-(Q^F+\diag(\beta_1^F,\ldots,\beta_m^F))H_m\eta^F\gg0.\]
Set $\xi^F=H_m\eta^F$. Then
\[\xi_i^F=\eta_m^F+\cdots+\eta_i^F,\quad i=1,\ldots,m,\]
which yields that $\xi^F_{i+1}<\xi_i^F$ for $i=1,\ldots,m-1$ and $\xi^F\gg 0$. We extend $\xi^F$ to a vector $\xi $ on $\S$ by setting $\xi_j=\xi_i^F$ if  $j\in F_i$.
Let $\phi:\S\ra \{1,\ldots,m\}$ be a map defined by $\phi(j)=k$ if $j\in F_k$. Let $Q_x g(i)=\sum_{j\neq i}q_{ij}(x)(g_j-g_i)$ for $g\in \mathscr B(\S)$. Set $V(x,i)=\xi_i\rho(x)$. By the definitions of $Q^F$, $\beta^F$ and the decreasing property of $\xi_i^F$,  we have, for $r\in F_i$,
\begin{align*}
Q_x\xi(r)&=\sum_{j\neq r} q_{rj}(x)(\xi_j-\xi_i)=\sum_{j\not\in F_i}q_{rj}(x)(\xi_j-\xi_i)\\
&=\sum_{k<i}\big(\sum_{j\in F_k}  q_{rj}(x)\big)(\xi_k^F-\xi_i^F)+\sum_{k>i}\big(\sum_{j\in F_k} q_{rj}(x)\big)(\xi_k^F-\xi_i^F)\\
&\leq \sum_{k<i}q_{ik}^F(\xi_k^F-\xi_i^F)+\sum_{k>i}q_{ik}^F(\xi_k^F-\xi_i^F) =Q^F\bxi^F(\phi(r)).
\end{align*}
Furthermore, \begin{align*}
\mathscr A\, V(x,r)&=Q_x\xi(r)\rho(x)+\xi_rL^{(r)}\rho(x)\\
&\leq \big(Q^F\xi^F(\phi(r))+\beta_{\phi(r)}^F\xi_{\phi(r)}^F\big)\rho(x)\\
&=-\lambda_{\phi(r)}\rho(x)\leq 0.
\end{align*}
Note that $\inf_{i\in \S}V(x,i)=\min_{i\leq m}\xi_i^F\rho(x)$ vanishes only at $0$, then
applying Lemmas \ref{lem-2.2} and \ref{lem-2.3}, we can get the desired conclusion.
\end{proof}

\begin{rem}
  The function $\rho(x)$ appeared in Theorem \ref{t-infi} is a test function, which we used to characterize  the behavior of the process $(X_t)$ in each fixed environment.
  Taking $\rho(x)$ to be a polynomial function will work for many cases as being shown by our examples.
  After getting the constants $\bar\beta_i$, $i\in \S$, one needs to classify the state space $\S$ according to $(\bar \beta_i)$. This part needs some skill. Other conditions of this theorem can be checked directly.
\end{rem}
\begin{exam}\label{exam-2.1}
Let $(X_t)$ be a process on $\R$ satisfying
\[\d X_t=b_{\La_t} X_t\d t+X_t^2\wedge |X_t|\, \d B_t,\quad X_0=x_0\neq 0, \]
and $(\La_t)$ is a birth-death process on $\S=\{1,2,\ldots\}$ with  $q_{ii+1}(x)=c_i+(i-1)\sin x$ for $i\geq 1$, $q_{ii-1}(x)=a_i+(i-2)\sin x$ for $i\geq 2$, $q_{ij}(x)=0$ for any $j\notin\{i-1,i,i+1\}$, where $a_i,\,c_i$ are positive constants. Let $(X_t^{(i)})$ be the diffusion process associated with $(X_t)$ in the fixed environment $i$, that is,
\[\d X_t^{(i)}=b_i X_t^{(i)}\d t+(X_t^{(i)})^2\wedge|X_t^{(i)}|\,\d B_t.\]
It is easy to know that $x=0$ is stable in probability for the process $(X_t^{(i)})$ if $b_i<0$, and is unstable in probability if $b_i>0$. By taking $\rho(x)=|x|$, it is easy to see $L^{(i)}\rho(x)=b_i|x|$ for $x\neq 0$. So we have $\bar\beta_i=b_i$ in condition (A2).
Take the partition as $F_1=\{1\}$ and $F_2=\{2,3,\ldots\}$. Then $q_{12}^F=c_1$, $q_{21}^F=a_2$.

More precisely,  we set $b_1=-1$ and $b_i=\kappa-i^{-1}$ for some positive constant $\kappa$.
Then $\beta_1^F=b_1=-1$ and $\beta_2^F=\kappa$. The matrix $-(Q^F+\diag(\beta_1^F,\beta_2^2))H_2$ is a nonsingular M-matrix if and only if $\kappa<\frac{a_2}{1+c_1}$.
Therefore, when $\kappa<\frac{a_2}{1+c_1}$, $x=0$ is asymptotically stable in probability for $(X_t,\La_t)$ according to Theorem \ref{t-infi}.
\end{exam}

\section{Principal eigenvalue and stability of \rsp}
In this section we continue to study the stability of the equilibrium point $x=0$ for the system given by (\ref{1.1}) and (\ref{1.2}). We shall provide some criteria in terms of the principal eigenvalue of a bilinear form. Let $(X_t,\La_t)$ be defined by (\ref{1.1}) and (\ref{1.2}) with state-independent $(q_{ij})$ and $N\leq \infty$. In this section, we need to assume that $(\La_t)$ is reversible. To emphasize on this condition, we use $(\pi_i)_{i\in \S}$ to denote the probability measure such that $\pi_iq_{ij}=\pi_jq_{ji}$, $i,\,j\in \S$.

Let $D$ be a neighborhood of 0. Let $\rho\in C^2(D)$ be a nonnegative function and satisfy
\begin{itemize}
  \item[(A3)] for each $i\in\S$, there exists a number $\gamma_i\in \R$ such that
  \[L^{(i)} \rho(x)\leq \gamma_i \rho(x),\quad |x|\in D\backslash\{0\},\]  and $\gamma_\cdot\not\equiv 0$.
\end{itemize}
Let $L^2(\pi)=\big\{f\in \mathscr B(\S); \sum_{i=1}^N\pi_i f_i^2<\infty\big\}$, and denote by $\|\cdot\|$ and $\la\cdot,\cdot\raa$ respectively the norm and inner product in $L^2(\pi)$.
Set
\begin{equation}\label{dirich}
\mathcal{E}(f)=\frac 12\sum_{i,j=1}^N\pi_iq_{ij}(f_j-f_i)^2-\sum_{i=1}^N\pi_i\gamma_if_i^2,\quad f\in L^2(\pi),
\end{equation}where $(\gamma_i)$ is given by (A3).
The domain of this bilinear form $\mathscr D(\mathcal E)$ is defined by
\[\mathscr D(\mathcal E)=\{f\in L^2(\pi);\ D(f)<\infty\}.\]
The principal eigenvalue of $\mathcal E(f)$ is defined by
\begin{equation}\label{prin-eig}
\lambda_0=\inf\big\{\mathcal{E}(f);\ f\in \mathscr D(\mathcal{E}),\ \|f\|=1\big\}.
\end{equation}
The notation $\mathcal{E}(f)$ is similar to the Dirichlet form for a Markov chain with killing, but in our case $\gamma_i$ could be positive which causes $\mathcal{E}(f)$ may take negative value for some $f\in L^2(\pi)$. Due to this fact, $\|f\|_{\mathcal{E}}^2:=\|f\|^2+\mathcal{E}(f)$ is no longer a norm. Some new difficulties appears when we provide a lower bound of $\lambda_0$ compared with the estimate of lower bound of the principal eigenvalue of a Dirichlet form.

Next, we apply the principal eigenvalue to study the stability of $(X_t,\La_t)$.
\begin{thm}\label{stab-1}
Suppose that (H2.1) (H2.2) hold and $N<\infty$. Assume that $(\La_t)$ is reversible w.r.t. the probability measure $(\pi_i)$.  Let $\rho\in C^2(D)$ be a nonnegative function such that (A3) holds. $\mathcal{E}(f)$ and $\lambda_0$ are defined by (\ref{dirich}) and (\ref{prin-eig}) as above.  Assume $\lambda_0>0$. Then
the equilibrium point $x=0$ is asymptotically stable in probability if $\rho(x)$ vanishes only at $0$ and is unstable in probability if $\lim_{x\ra 0}\rho(x)=\infty$.
\end{thm}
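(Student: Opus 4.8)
The plan is to turn the spectral assumption $\lambda_0>0$ into the existence of a strictly positive vector $g=(g_i)_{i\in\S}$ on $\S$ and then use $V(x,i)=g_i\rho(x)$ as the Lyapunov function demanded by Lemmas \ref{lem-2.2} and \ref{lem-2.3}. The first step is to identify the bilinear form with a symmetric operator: using the reversibility $\pi_iq_{ij}=\pi_jq_{ji}$ and the usual summation by parts, $\frac12\sum_{i,j}\pi_iq_{ij}(f_j-f_i)^2=-\la Qf,f\raa$, so that $\mathcal E(f)=-\la \mathcal L f,f\raa$ where $\mathcal L f(i):=Qf(i)+\gamma_if_i$, and $\mathcal L$ is self-adjoint on $L^2(\pi)$. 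Since $N<\infty$, $L^2(\pi)$ is finite-dimensional, its unit sphere is compact, and $\lambda_0$ is attained at some $g$ with $\|g\|=1$; by the variational characterisation $g$ is a principal eigenfunction,
\[
Qg(i)+\gamma_ig_i=-\lambda_0 g_i,\qquad i\in\S.
\]

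Next I would show $g$ may be taken strictly positive. From $(f_j-f_i)^2\ge(|f_j|-|f_i|)^2$ one gets $\mathcal E(|g|)\le\mathcal E(g)=\lambda_0$, while $\||g|\|=1$ forces $\mathcal E(|g|)\ge\lambda_0$ by the definition of $\lambda_0$; hence $|g|$ is again a minimiser, so we may assume $g\ge0$. If $g_{i_0}=0$ for some $i_0$, the eigenvalue equation at $i_0$ reads $\sum_{j\neq i_0}q_{i_0j}g_j=(q_{i_0}-\gamma_{i_0}-\lambda_0)g_{i_0}=0$, where $q_{i_0}=-q_{i_0i_0}$; since $q_{i_0j}\ge0$ and $g\ge0$, $g_j=0$ whenever $q_{i_0j}>0$, and the irreducibility of $(q_{ij})$ propagates this to $g\equiv0$, contradicting $\|g\|=1$. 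Therefore $g_i>0$ for all $i\in\S$, and, since $N<\infty$, $g_{\min}:=\min_{i\in\S}g_i>0$.

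Now set $V(x,i)=g_i\rho(x)$ for $x\in D\setminus\{0\}$, $i\in\S$. Then $V(\cdot,i)\ge0$ is twice continuously differentiable on $D\setminus\{0\}$, and by (A3), $g_i>0$, $\rho\ge0$ and $\lambda_0>0$,
\[
\mathscr A V(x,i)=g_iL^{(i)}\rho(x)+\rho(x)Qg(i)\le g_i\gamma_i\rho(x)+\rho(x)Qg(i)=\big(Qg(i)+\gamma_ig_i\big)\rho(x)=-\lambda_0 g_i\rho(x)\le0
\]
for all $x\in D\setminus\{0\}$, $i\in\S$. Moreover $\inf_{i\in\S}V(x,i)=g_{\min}\rho(x)$. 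If $\rho$ vanishes only at $0$, then $\inf_{i\in\S}V(x,i)$ vanishes only at $0$ and $V(\cdot,i)$ is continuous on $D$, so Lemma \ref{lem-2.2} gives asymptotic stability in probability; if $\lim_{x\ra0}\rho(x)=\infty$, then $\lim_{x\ra0}V(x,i)=\infty$ for each $i\in\S$, so Lemma \ref{lem-2.3} gives instability in probability.

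The main obstacle is the second step, the strict positivity of the principal eigenfunction. This is a Perron--Frobenius phenomenon, since $\mathcal L$ has nonnegative and irreducible off-diagonal part, but some care is needed: $\mathcal L$ is self-adjoint with respect to the $\pi$-weighted inner product rather than the Euclidean one (which is why I prefer to argue through $|g|$ and the eigenvalue equation instead of quoting Perron--Frobenius for nonnegative matrices directly), and the $\gamma_i$ are allowed to be positive, so $\mathcal L$ need not be dissipative and $-\lambda_0$ need not be $\le0$; it is exactly the hypothesis $\lambda_0>0$ that compensates and keeps $\mathscr A V\le0$. Everything else is a routine application of the already-established Lemmas \ref{lem-2.2} and \ref{lem-2.3}.
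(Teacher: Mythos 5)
Your proposal is correct and follows essentially the same route as the paper: identify $\lambda_0$ as the smallest eigenvalue of $-\Omega$ with $\Omega f(i)=Qf(i)+\gamma_if_i$ on the finite-dimensional space $L^2(\pi)$, extract a nonnegative eigenfunction via $\mathcal{E}(|g|)\leq\mathcal{E}(g)$, upgrade to strict positivity by irreducibility, and feed $V(x,i)=g_i\rho(x)$ into Lemmas \ref{lem-2.2} and \ref{lem-2.3}. The only cosmetic difference is that you propagate the zero set of $g$ forward by contradiction while the paper propagates positivity backward from a point with $g_k>0$; these are the same argument.
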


\begin{proof}
  Let \begin{equation}\label{omega}
  \Omega f(i)=Qf(i)+\gamma_i f_i=\sum_{j\neq i} q_{ij}(f_j-f_i)+\gamma_i f_i  \quad\text{for}\  f\in \mathscr D(D).\end{equation}
  Then $\mathcal{E}(f)=\la f,-\Omega f\raa$.
  When $N<\infty$, $\lambda_0$ must be the minimal eigenvalue of the operator $-\Omega$ by the variational representation of the eigenvalues of finite matrices.  Let $g$ be an eigenfunction corresponding to $\lambda_0$. Since $\mathcal{E}(g)\geq \mathcal{E}(|g|)$, it holds that $g\geq 0$. It is easy to see $g\not\equiv 0$. So there exists a $k\in \S$ such that
  $g_k>0$. If
  $q_{ik}>0$ for some $i\in \S$, then
  \[0<q_{ik} g_k\leq \sum_{j\neq i} q_{ij} g_j=(q_i-\gamma_i-\lambda_0)g_i.\]
  This yields that $g_i>0$ and $q_i-\gamma_i-\lambda_0>0$. As $Q$ is irreducible, we can prove inductively that $g_i>0$ for each $i\in \S$. Moreover, $\S$ is a finite set now, so $\min_{i\in \S} g_i>0$.

  Let $V(x,i)=g_i\rho(x)$ for $x\in D,\,i\in \S$. Due to (A3), we have
  \begin{align*}
  \mathscr A\,V(x,i)\leq \big(Q g(i)+\gamma_ig_i\big)\rho(x)&=-\lambda_0g_i\rho(x)\leq 0,\quad x\in D\backslash\{0\},\,i\in\S.
  \end{align*}
  If  $\lim_{x\ra 0}\rho(x)=\infty$, we have $\lim_{x\ra 0}V(x,i)=\lim_{x\ra 0}g_i\rho(x)=\infty$ for each $i\in \S$. Hence,  by Lemma \ref{lem-2.3}, the equilibrium point $x=0$ is unstable in probability.

  If $\rho(x)$ vanishes only at $0$, due to the finiteness of $N$, we have \[\inf_{i\in \S}V(x,i)=\big(\min_{i\in\S}g_i\big)\rho(x)\] also vanishes only at $0$. Applying Lemma \ref{lem-2.2}, we can get the desired result.
\end{proof}

\begin{thm}\label{stab-2}
Suppose that (H2.1) (H2.2) hold and $N=\infty$. Assume that $(\La_t)$ is reversible w.r.t. the probability measure $(\pi_i)$.  Let $\rho\in C^2(D)$ be a nonnegative function such that (A3) holds. $\mathcal{E}(f)$ and $\lambda_0$ are defined by (\ref{dirich}) and (\ref{prin-eig}) as above.  Assume $\lambda_0>0$  and $\lambda_0$ is attainable, i.e. there exists $g\in L^2(\pi)$, $g\not\equiv 0$, such that $\mathcal{E}(g)=\lambda_0\|g\|^2$. Then
\begin{itemize}
\item[$\mathrm{(i)}$] the equilibrium point $x=0$ is unstable in probability if $\lim_{x\ra 0}\rho(x)=\infty$.
\item[$\mathrm{(ii)}$] Assume further that $\liminf_{i\ra \infty} g_i\neq 0$, then the equilibrium point $x=0$ is asymptotically stable in probability if $\rho(x)$ vanishes only at $0$.
\end{itemize}
\end{thm}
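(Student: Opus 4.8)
The plan is to mimic the proof of Theorem \ref{stab-1}, using the attaining function $g$ to build the Lyapunov function $V(x,i)=g_i\rho(x)$, but now carefully handling the fact that $\S$ is infinite. First I would verify that $g$ can be taken nonnegative: since $\mathcal{E}(|g|)\le \mathcal{E}(g)$ (the off-diagonal part only decreases under $|\cdot|$ and the diagonal part $-\sum_i\pi_i\gamma_i g_i^2$ is unchanged) and $\||g|\|=\|g\|$, the function $|g|$ also attains $\lambda_0$; so without loss of generality $g\ge 0$. Next I would show $g$ is a genuine eigenfunction of $\Omega$ (with $\Omega$ as in \eqref{omega}): because $\lambda_0$ is attained, $g$ is a minimizer of the Rayleigh quotient, so the Euler--Lagrange equation gives $-\Omega g=\lambda_0 g$, i.e. $Qg(i)+\gamma_i g_i=-\lambda_0 g_i$ for every $i\in\S$. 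Here one must be slightly careful that the quadratic form is well-defined and that differentiating under the sum is legitimate; the assumption $M=\sup_i\bar\beta_i<\infty$ from the surrounding setup, together with boundedness of the jump rates implicit in (H2.1), should make $\Omega$ a bounded perturbation of a Dirichlet-form generator, so this is routine.

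Once $-\Omega g=\lambda_0 g$ with $g\ge 0$, $g\not\equiv 0$, I would run the same irreducibility bootstrap as in Theorem \ref{stab-1}: pick $k$ with $g_k>0$; if $q_{ik}>0$ then $0<q_{ik}g_k\le \sum_{j\ne i}q_{ij}g_j=(q_i-\gamma_i-\lambda_0)g_i$, forcing $g_i>0$; since $Q$ is irreducible this propagates to give $g_i>0$ for all $i\in\S$. This establishes condition (i): with $V(x,i)=g_i\rho(x)$ we get from (A3)
\[
\mathscr A\,V(x,i)=\big(Qg(i)+\gamma_i g_i\big)\rho(x)=-\lambda_0 g_i\rho(x)\le 0,\quad x\in D\backslash\{0\},\ i\in\S,
\]
and if $\lim_{x\ra 0}\rho(x)=\infty$ then $\lim_{x\ra 0}V(x,i)=\infty$ for each fixed $i$, so Lemma \ref{lem-2.3} yields instability. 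Note that for Lemma \ref{lem-2.3} one only needs $\lim_{x\ra0}V(x,i)=\infty$ for each $i$ separately, which holds with no uniformity in $i$.

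For part (ii), the subtlety absent in the finite-state case is that $\inf_{i\in\S}V(x,i)=\big(\inf_{i\in\S}g_i\big)\rho(x)$ might vanish even when $\rho(x)$ does not, if $\inf_i g_i=0$. This is exactly why the extra hypothesis $\liminf_{i\ra\infty}g_i\ne 0$ is imposed: combined with $g_i>0$ for every $i$ (just proved) and $g\ge 0$, the sequence $(g_i)$ is bounded away from $0$ at infinity and strictly positive at each finite index, hence $\inf_{i\in\S}g_i>0$. Therefore $\inf_{i\in\S}V(x,i)=(\inf_i g_i)\rho(x)$ vanishes only at $x=0$, so condition (i) of Lemma \ref{lem-2.2} is satisfied, and together with $\mathscr A\,V\le 0$ (condition (ii) of that lemma) we conclude $x=0$ is asymptotically stable in probability. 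I expect the main obstacle to be the rigorous justification that the attaining $g$ is a pointwise eigenfunction of $\Omega$ rather than merely a form-minimizer — one needs the form $\mathcal{E}$ to be closed (or at least the minimizer to lie in a suitable domain) and the perturbation $-\gamma_\cdot$ to be form-bounded so that the standard variational argument applies in $L^2(\pi)$; everything after that is a direct transcription of the finite-state proof plus the elementary observation about $\inf_i g_i$.
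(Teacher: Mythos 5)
Your overall strategy coincides with the paper's: take the minimizer $g$, pass to $|g|$ to get nonnegativity, upgrade $g$ to a pointwise eigenfunction of $\Omega$, run the irreducibility bootstrap to get $g_i>0$ for every $i$, and feed $V(x,i)=g_i\rho(x)$ into Lemmas \ref{lem-2.2} and \ref{lem-2.3}; your handling of part (i) and of the role of $\liminf_{i\ra\infty}g_i\neq 0$ in part (ii) matches the paper exactly.

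The one step you defer as ``routine'' is precisely where your proposed justification does not apply. You argue that the Euler--Lagrange equation $-\Omega g=\lambda_0 g$ is standard because $\Omega$ is ``a bounded perturbation of a Dirichlet-form generator,'' citing boundedness of the jump rates ``implicit in (H2.1)'' and the hypothesis $M=\sup_i\bar\beta_i<\infty$. Neither is available here: (H2.1) does not bound $\sup_i q_i$; the condition on $\sup_i\bar\beta_i$ belongs to the finite-partition method of Subsection 2.2 and is not assumed in Theorem \ref{stab-2}; and the declared purpose of the principal-eigenvalue method is exactly to treat unbounded rates and unbounded $(\gamma_i)$ (see Examples \ref{exam-3} and \ref{exam-2}). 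The paper closes this gap with an elementary argument requiring no global boundedness: fix $k$, perturb only the $k$-th coordinate by setting $\tilde g_k=g_k+\veps$, use reversibility to expand $\mathcal{E}(\tilde g)=\mathcal{E}(g)+2\veps\pi_k\big((-Qg)(k)-\gamma_kg_k\big)+\veps^2\pi_k(q_k-\gamma_k)$, compare with $\mathcal{E}(\tilde g)\geq\lambda_0\|\tilde g\|^2$ and $\mathcal{E}(g)=\lambda_0\|g\|^2$, and let $\veps\ra 0^{+}$ and $\veps\ra 0^{-}$ to conclude $Qg(k)+\gamma_kg_k=-\lambda_0 g_k$; only the finiteness of the individual $q_k$ and $\gamma_k$ (guaranteed by conservativeness) is used. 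With that substitution in place of your abstract appeal, your proof is complete and agrees with the paper's.
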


\begin{proof}
We first show that $g\gg 0$ and $Qg(i)+\gamma_i g_i=-\lambda_0 g_i$, $i\in \S$.
To obtain this, we use the variational method as in \cite{Ch00}. It is easy to check $\mathcal{E}(f)\geq \mathcal{E}(|f|)$, so it must hold $g\geq 0$. For a fixed $k\in\S$, let $\tilde  g_i=g_i$ for $i\neq k$ and $\tilde g_k=g_k+\veps$.
  It holds $Q\tilde g(i)=Qg(i)+\veps q_{ik}$ for $i\neq k$ and $Q\tilde g(k)=Qg(k)-\veps q_k$.
We have
\begin{align*}
\mathcal{E}(\tilde g)&=\la \tilde g,-Q\tilde g\raa-\sum_{i=1}^N\pi_i\gamma_i\tilde g_i^2\\
&=\la  g, -Qg\raa-\sum_{i=1}^N\pi_i \gamma_i  g_i^2 +2\veps \pi_k(-Qg)(k) -2\veps\pi_k\gamma_kg_k+\veps^2\pi_k(q_k-\gamma_k),
\end{align*}where we have used the fact $\pi_iq_{ik}=\pi_kq_{ki}$.
Because $\mathcal{E}(\tilde g)\geq \lambda_0\|\tilde g\|^2$ and $\mathcal{E}(g)=\lambda_0\|g\|^2$, we get
\begin{equation}\label{eigen}
-2\veps \pi_k\big(\lambda_0g_k+Qg(k)+\gamma_kg_k\big)+\veps^2\pi_k(q_k-\gamma_k)-\lambda_0\veps^2\pi_k\geq 0.
\end{equation}
Dividing both sides of (\ref{eigen}) by $\veps>0$ and then letting $\veps\ra 0^+$, we get
$Qg(k)+\gamma_kg_k+\lambda_0 g_k\leq 0$. Dividing both sides of (\ref{eigen}) by $\veps<0$ and then letting $\veps\ra 0^-$, we get $Qg(k)+\gamma_k g_k+\lambda_0 g_k\geq 0$. Therefore, $Qg(k)+\gamma_k g_k=-\lambda_0 g_k$. Since $k$ is chosen arbitrarily in $\S$, we have
$Qg(i)+\gamma_i g_i=-\lambda_0 g_i$ for each $i\in \S$.

Since $g\not\equiv 0$ and $g\geq 0$, there exists $k$ such that $g_k>0$. If $q_{ik}>0$, then
\[0<q_{ik}g_k\leq \sum_{j\neq i}q_{ij}g_j=(q_i-\gamma_i-\lambda_0)g_i,\]
so $g_i>0$ and $q_i-\gamma_i-\lambda_0>0$. As $Q$ is irreducible, by an inductive procedure, we can prove that $g_i>0$ for every $i\in \S$.

Next, set $V(x,i)=g_i \rho(x)$ for $x\in\R^d$, $i\in \S$. We obtain for $|x|\geq r_0$, $i\in \S$,
\begin{equation}\label{ine-lya-2}
\begin{split}
  \mathscr A\, V(x,i)&=Qg(i)\rho(x)+g_i L^{(i)} \rho(x)\\
  &\leq \big(Qg(i)+\gamma_ig_i\big)\rho(x)=-\lambda_0 g_i\rho(x)\leq 0.
\end{split}
\end{equation}
If $\lim_{x\ra 0}\rho(x)=\infty$, then for each $i\in \S$, we have $\lim_{x\ra 0}V(x,i)=\lim_{x\ra 0}g_i\rho(x)=\infty$. By Lemma \ref{lem-2.3}, we can prove the assertion (i).
Under the condition $\liminf_{i\ra \infty} g_i\neq 0$, we get that  $\inf_{i\in\S}g_i>0$. Therefore, when $\rho(x)$ vanishes only at $0$, we can obtain that $\inf_{i\in\S}V(x,i)=\big(\inf_{i\in \S} g_i\big)\rho(x)$ still vanishes only at $0$. Then the assertion (ii) follows from Lemma \ref{lem-2.2}, and we complete the proof.
\end{proof}

Using the principal eigenvalue to study the stability of $(X_t,\La_t)$ when $N=\infty$, we don't need to assume the boundedness of $Q$-matrix and the vector $(\gamma_i)$ given by (A3). We construct an example of nonlinear \rsp to show its usefulness. We shall give a lower bound estimate of $\lambda_0$ in next section.

\begin{exam}\label{exam-3}
Let $(X_t)$ be a process on $\R$ satisfying
\begin{equation}\label{proc-2}
\d X_t=\mu_{\La_t}X_t\d t+X_t^2\wedge |X_t|\d B_t, \quad X_0=x_0\neq 0,
\end{equation}
where $a\wedge b=\min\{a,b\}$, and $\mu_0=-c$, $\mu_i=\gamma$ for $i\geq 1$, $c,\gamma$ are positive constants.  $(\La_t)$ is a birth-death process on $\S=\{0,1,\ldots\}$ with $q_{ii+1}=b_i=b(i+1)$, $q_{ii-1}=a_i=a(i+1)$. Taking $\rho(x)=|x|$, then $L^{(i)}\rho(x)=\mu_i\rho(x)$ for $x\neq 0$. Set $g_i=i+1$. If $a-b-\gamma>0$ and $c-b>0$, then there exists $\lambda>0$ such that $\Omega g(i)\leq -\lambda g_i$, $i\geq 0$.  So $\lambda_0\geq \lambda>0$ by Theorem \ref{t-var} below. Noting $\inf_{i\in\S} g_i=1>0$, by Theorem \ref{stab-2}, the equilibrium point $x=0$ is asymptotic stable in probability if $a-b-\gamma>0$ and $c-b>0$.
\end{exam}

\section{Principal eigenvalue and recurrence of \rsp}
In this section, we shall provide some criteria for the recurrence of \rsp in terms of the principal eigenvalue of a bilinear form. In \cite{Sh14b}, we have provided some explicit criteria for the recurrence of \textbf{RSDP}. But there when dealing with switching in a countable state space, we need to assume the boundedness of $Q$-matrix. In this part, we shall take advantage of the principal eigenvalue to provide a criterion for recurrence of \rsp without the boundedness assumption on the $Q$-matrix.

Let $(X_t,\La_t)$ satisfy (\ref{1.1}) (\ref{1.2}) with $(q_{ij})$ being state-independent. Assume $(\La_t)$ is reversible w.r.t. the probability measure $(\pi_i)_{i\in\S}$.
We use the following condition on the coefficients of $(X_t,\La_t)$ in this section.
\begin{itemize}
\item[(H4.1)]\ There exists a constant $K$ such that
  \begin{gather*}
    |b(x,i)|+\|\sigma(x,i)\|\leq K(1+|x|), \ x\in\R^d, \ i\in\S, \\
    |b(x,i)-b(y,i)|+\|\sigma(x,i)-\sigma(y,i)\|\leq K |x-y|,\ x\in \R^d,\ i\in\S,
  \end{gather*}
\end{itemize}
Let $\rho\in C^2(\R^d)$ be a positive function satisfying
\begin{itemize}
  \item[(A4)] for each $i\in \S$, there exists a number $\gamma_i\in \R$ such that
  \[L^{(i)} \rho(x)\leq \gamma_i\rho(x),\quad |x|\geq r_0\] for some $r_0>0$. Moreover, $\gamma_\cdot\not\equiv 0$.
\end{itemize}
We can define the bilinear form $\mathcal{E}(f)$ and it principal eigenvalue $\lambda_0$ by (\ref{dirich}) and (\ref{prin-eig}) similarly by using $(\gamma_i)$ given by (A4).
\begin{thm}\label{t-4.1}
  Suppose that (H4.1) holds and $N<\infty$. Suppose that $(\La_t)$ is reversible w.r.t. a probability measure $(\pi_i)$. Let $\rho\in C^2(\R^d)$ satisfy $\rho>0$ and (A4). Assume $\lambda_0>0$.
  Then $(X_t,\La_t)$ is positive recurrent if\, $\lim_{|x|\ra \infty} \rho(x)=\infty$ and is transient if\, $\lim_{|x|\ra \infty} \rho(x)=0$.
\end{thm}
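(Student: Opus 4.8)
The plan is to follow the proof of Theorem~\ref{stab-1}, replacing the role of the behaviour of $\rho$ near $0$ by its behaviour near infinity, and converting the resulting Lyapunov estimate into a recurrence (resp.\ transience) statement. First I would produce the Lyapunov function. Since $N<\infty$ and, with $\Omega f(i)=Qf(i)+\gamma_i f_i$ and $(\gamma_i)$ taken from (A4), the operator $-\Omega$ is symmetric on the finite-dimensional space $L^2(\pi)$ (reversibility of $(\La_t)$) with $\mathcal E(f)=\la f,-\Omega f\raa$, the number $\lambda_0$ is the smallest eigenvalue of $-\Omega$. Taking an eigenfunction $g$ and using $\mathcal E(g)\geq\mathcal E(|g|)$ we may assume $g\geq0$, $g\not\equiv0$; then the identity $\sum_{j\neq i}q_{ij}g_j=(q_i-\gamma_i-\lambda_0)g_i$ together with the irreducibility of $Q$ shows, by the same induction as in Theorem~\ref{stab-1}, that $g\gg0$. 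Hence $g_{\min}:=\min_{i\in\S}g_i>0$ and $Qg(i)+\gamma_i g_i=-\lambda_0 g_i$ for every $i\in\S$.

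Next, set $V(x,i)=g_i\rho(x)$, which is nonnegative because $g\gg0$ and $\rho>0$. Using (A4), for $|x|\geq r_0$ and $i\in\S$,
\[
\mathscr A\,V(x,i)=Qg(i)\rho(x)+g_iL^{(i)}\rho(x)\leq\big(Qg(i)+\gamma_i g_i\big)\rho(x)=-\lambda_0 g_i\rho(x)\leq-\lambda_0 g_{\min}\rho(x).
\]
If $\rho(x)\ra\infty$ as $|x|\ra\infty$, then $V(\cdot,i)\geq g_{\min}\rho(\cdot)\ra\infty$, so $V$ is a proper function on $\R^d\times\S$, and the display gives $\mathscr A\,V(x,i)\leq-1$ once $\lambda_0 g_{\min}\rho(x)\geq1$, i.e.\ outside a sufficiently large ball. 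Together with the non-explosiveness guaranteed by the linear growth part of (H4.1), the classical Foster--Lyapunov criterion for regime-switching diffusions (cf.\ \cite[Chapter~3]{YZ}) yields that $(X_t,\La_t)$ is positive recurrent.

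For the transience part, assume $\rho(x)\ra0$ as $|x|\ra\infty$; then $\mathscr A\,V\leq0$ for $|x|\geq r_0$ and $V$ is bounded. Fix $x$ with $|x|>r_0$, let $R>|x|$, and set $\tau_{r_0}=\inf\{t\geq0:|X_t|\leq r_0\}$ and $\sigma_R=\inf\{t\geq0:|X_t|\geq R\}$. On the annulus $\{r_0<|X_t|<R\}$ one has $\mathscr A\,V\leq0$, so It\^o's formula, optional stopping and Fatou's lemma (exactly as in the proof of Lemma~\ref{lem-2.3}) give
\[
V(x,i)\geq\E\big[V(X_{\tau_{r_0}},\La_{\tau_{r_0}})\mathbf 1_{\{\tau_{r_0}<\sigma_R\}}\big]\geq c_0\,\p_{x,i}(\tau_{r_0}<\sigma_R),\qquad c_0:=g_{\min}\inf_{|y|=r_0}\rho(y)>0,
\]
where we used $V\geq0$ and $|X_{\tau_{r_0}}|=r_0$. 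Since (H4.1) forces non-explosion, $\sigma_R\ra\infty$ a.s.\ as $R\ra\infty$, whence $\p_{x,i}(\tau_{r_0}<\infty)=\lim_{R\ra\infty}\p_{x,i}(\tau_{r_0}<\sigma_R)\leq V(x,i)/c_0=g_i\rho(x)/c_0$. Because $\rho(x)\ra0$, choosing $|x|$ large enough makes the right-hand side strictly less than $1$, so the process started at such $(x,i)$ returns to $\bar B_{r_0}$ with probability $<1$; by the recurrence/transience dichotomy for these processes this shows that $(X_t,\La_t)$ is transient.

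The step I expect to be the main obstacle is the transience argument: one must make sure that the bounded supermartingale stopped at $\tau_{r_0}\wedge\sigma_R$ genuinely controls $\p_{x,i}(\tau_{r_0}<\infty)$ after letting $R\ra\infty$ — this is precisely where non-explosion enters — and one must appeal to the correct dichotomy rather than settle for ``$\p_{x,i}(\tau_{r_0}=\infty)>0$ from one initial point''. The construction of the strictly positive eigenfunction $g\gg0$ is routine once $N<\infty$, being word-for-word the corresponding step in the proof of Theorem~\ref{stab-1}.
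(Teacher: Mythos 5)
Your proposal is correct and follows essentially the same route as the paper: take the strictly positive eigenfunction $g$ of $-\Omega$ for the minimal eigenvalue $\lambda_0$ (exactly as in Theorem \ref{stab-1}), set $V(x,i)=g_i\rho(x)$ to get $\mathscr A\,V(x,i)\leq-\lambda_0 g_i\rho(x)$ for $|x|\geq r_0$, and conclude positive recurrence (resp.\ transience) from the Foster--Lyapunov method when $\rho\to\infty$ (resp.\ from the bounded supermartingale/hitting-probability argument when $\rho\to 0$). The paper delegates both final steps to the Lyapunov-function method of Pinsky--Pinsky, whereas you spell out the transience half explicitly; the substance is the same.
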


\begin{proof}
  When $N<\infty$, $\lambda_0$ must be the minimal eigenvalue of the operator $-\Omega$. Let $g$ be an eigenfunction corresponding to $\lambda_0$. Then $g$ must be positive, i.e. for each $i\in \S$, $g_i>0$ and hence $\min_{i\in\S} g_i>0$ due to the finiteness of $N$. This fact follows from the irreducibility of $(q_{ij})$ and the minimal property of $\lambda_0$. Let $V(x,i)=g_i\rho(x)$ for $x\in \R^d$ and $i\in \S$. By (A1), we have
  \begin{equation}\label{ine-lya}
  \begin{split}
  \mathscr A\,V(x,i)&=Qg(i)\rho(x)+g_iL^{(i)}\rho(x)\\
  &\leq \big(Qg(i)+\gamma_i g_i\big)\rho(x)=-\lambda_0 g_i\rho(x).
  \end{split}
  \end{equation}
Therefore, if $\lim_{|x|\ra \infty}\rho(x)=\infty$, there exist positive constants $\veps$ and $r_0>0$ such that $g_i\rho(x)\geq \veps>0$ for every $|x|\geq r_0$, $i\in \S$. Applying the method of Lyapunov function (see \cite[Section 2]{PP}), $(X_t,\La_t)$ is positive recurrent. If $\lim_{|x|\ra \infty}\rho(x)=0$, then $(X_t,\La_t)$ is transient due to inequality (\ref{ine-lya}) and the method of Lyapunov function.
\end{proof}

\begin{thm}\label{t-4.2}
Suppose that (H4.1) holds and $N=\infty$. Suppose $(\La_t)$ is reversible w.r.t. a probability measure $(\pi_i)$. Let $\rho\in C^2(\R^d)$ be a positive function such that (A4) holds. Assume that    $\lambda_0>0$ and  $\lambda_0$ is attainable, i.e. there exists $g\in L^2(\pi)$, $g\not\equiv 0$ such that $\mathcal{E}(g)=\lambda_0\|g\|^2$. Then
$(X_t,\La_t)$ is transient if\, $\lim_{|x|\ra \infty} \rho(x)=0$. Assume further that $\liminf_{i\ra \infty} g_i\neq 0$, then $(X_t,\La_t)$ is recurrent if\, $\lim_{|x|\ra \infty}\rho(x)=\infty$.
\end{thm}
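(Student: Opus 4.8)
The plan is to mirror the proof of Theorem~\ref{stab-2}, now reading ``a neighbourhood of $0$'' as ``a neighbourhood of $\infty$'' and replacing the stability statements of Lemmas~\ref{lem-2.2}--\ref{lem-2.3} by the Foster--Lyapunov criteria for recurrence and transience (as already used in the proof of Theorem~\ref{t-4.1}). The argument has three stages: first, upgrade the mere attainment $\mathcal E(g)=\lambda_0\|g\|^2$ to the eigenfunction equation together with strict positivity of $g$; second, build the Lyapunov function $V(x,i)=g_i\rho(x)$ and estimate $\mathscr A V$ on $\{|x|\ge r_0\}$; third, invoke the Lyapunov criteria.

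For the first stage I would copy, essentially verbatim, the variational/perturbation computation from the proof of Theorem~\ref{stab-2}. Since $\mathcal E(|f|)\le\mathcal E(f)$ one may take $g\ge 0$; fixing $k\in\S$ and perturbing $g$ to $\tilde g$ with $\tilde g_k=g_k+\veps$ and $\tilde g_i=g_i$ otherwise, expanding $\mathcal E(\tilde g)=\la g,-Qg\raa-\sum_i\pi_i\gamma_ig_i^2+2\veps\pi_k(-Qg)(k)-2\veps\pi_k\gamma_kg_k+\veps^2\pi_k(q_k-\gamma_k)$ (which uses reversibility $\pi_iq_{ik}=\pi_kq_{ki}$), and then using $\mathcal E(\tilde g)\ge\lambda_0\|\tilde g\|^2$ together with $\mathcal E(g)=\lambda_0\|g\|^2$, one divides by $\veps$ and lets $\veps\to 0^+$ and $\veps\to 0^-$ to obtain $Qg(k)+\gamma_kg_k=-\lambda_0g_k$ for every $k\in\S$. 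Strict positivity then follows exactly as in Theorem~\ref{stab-2}: $g\not\equiv 0$ and $g\ge 0$ give some $g_k>0$, whence $0<q_{ik}g_k\le\sum_{j\ne i}q_{ij}g_j=(q_i-\gamma_i-\lambda_0)g_i$ whenever $q_{ik}>0$, and irreducibility of $(q_{ij})$ propagates $g_i>0$ to all $i\in\S$.

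For the second stage, set $V(x,i)=g_i\rho(x)$ on $\{|x|\ge r_0\}$. By (A4) and the eigenfunction equation,
\[
\mathscr A V(x,i)=Qg(i)\rho(x)+g_iL^{(i)}\rho(x)\le\big(Qg(i)+\gamma_ig_i\big)\rho(x)=-\lambda_0g_i\rho(x)\le 0
\]
for $|x|\ge r_0$, $i\in\S$. In the recurrence case one has $\lim_{|x|\to\infty}\rho(x)=\infty$ together with the extra hypothesis $\liminf_{i\to\infty}g_i\ne 0$; combined with $g_i>0$ for every $i$ this yields $\inf_{i\in\S}g_i>0$, so $V(x,i)\ge(\inf_jg_j)\rho(x)\to\infty$ as $|x|\to\infty$ \emph{uniformly} in $i$. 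Together with $\mathscr A V\le 0$ outside the ball $B_{r_0}$ this is exactly the Foster--Lyapunov condition forcing recurrence (cf.\ \cite[Section~2]{PP}, \cite{Kh}). In the transience case one has $\lim_{|x|\to\infty}\rho(x)=0$, so $V(x,i)=g_i\rho(x)\to 0$ as $|x|\to\infty$ for each fixed $i$; applying It\^o's formula with the standard localization (as in the proof of Lemma~\ref{lem-2.3}) to the nonnegative supermartingale $V(X_{t\wedge\sigma},\La_{t\wedge\sigma})$ with $\sigma=\inf\{t\ge 0:|X_t|\le r_0\}$, letting $t\to\infty$ and using Fatou's lemma gives, for any reference state $i_1\in\S$ and any start $(x_0,i_0)$ with $|x_0|>r_0$,
\[
g_{i_0}\rho(x_0)\ \ge\ \E_{x_0,i_0}\big[g_{\La_\sigma}\rho(X_\sigma)\mathbf 1_{\{\sigma<\infty\}}\big]\ \ge\ g_{i_1}\Big(\inf_{|y|=r_0}\rho(y)\Big)\,\p_{x_0,i_0}(\sigma<\infty,\ \La_\sigma=i_1);
\]
since the linear growth bound (H4.1) prevents $|X_t|$ from reaching $B_{r_0}$ before a time of order $\log(|x_0|/r_0)$, the chain $(\La_t)$ is close to its stationary law by time $\sigma$ when $|x_0|$ is large, while the right-hand side above tends to $0$; hence the return probability to $B_{r_0}$ is strictly less than one for $|x_0|$ large, which gives transience.

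I expect the transience half (for $N=\infty$) to be the delicate point: because $g$ need not be bounded below (only $g_i>0$ for each $i$), $V=g\rho$ is not uniformly small at infinity, so the textbook Lyapunov transience criterion does not apply off the shelf and one is forced to argue directly with the stopped supermartingale against a single reference state $i_1$, using non-explosion and the linear-growth estimate to control the value $\La_\sigma$ of the chain at the first return to $B_{r_0}$. By contrast, in the recurrence half the hypothesis $\liminf_{i\to\infty}g_i\ne 0$ is precisely what restores the uniformity needed to quote the standard criterion; this is the same mechanism that distinguishes part~(i) from part~(ii) in Theorem~\ref{stab-2}. The first stage (eigenequation plus positivity of $g$) is routine once the proof of Theorem~\ref{stab-2} is in hand, and the $\mathscr A V$ computation in the second stage is immediate.
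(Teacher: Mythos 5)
Your first two stages (upgrading $\mathcal E(g)=\lambda_0\|g\|^2$ to $\Omega g=-\lambda_0 g$ with $g\gg0$ via the one-point perturbation, and the estimate $\mathscr A V\le-\lambda_0 g_i\rho\le0$ for $V=g\rho$ on $|x|\ge r_0$) coincide with the paper's proof, and your recurrence half is essentially the paper's Case 2: there the hypothesis $\liminf_{i\ra\infty}g_i\neq0$ gives $\inf_ig_i>0$, and the paper makes your citation of the Foster--Lyapunov criterion explicit by comparing the return time with $\tau_K=\inf\{t:|X_t|\ge K\}$ to get $\p(\tau>\tau_K)\le\rho(x)g_l\big/\big(\inf_{|y|\ge K}\rho(y)\inf_ig_i\big)\ra0$.

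The transience half, however, has a genuine gap, precisely at the point you flag as delicate. You take $\sigma=\inf\{t:|X_t|\le r_0\}$, i.e.\ the hitting time of the full cylinder $\{|x|\le r_0\}\times\S$, and your supermartingale bound only controls $\p(\sigma<\infty,\,\La_\sigma=i_1)$ for each \emph{fixed} $i_1$; since $g$ need not be bounded away from $0$ in this half, these bounds do not sum to anything useful over $i_1\in\S$. The repair you propose --- that (H4.1) forces $\sigma$ to be large for $|x_0|$ large, so that $\La_\sigma$ is ``close to its stationary law'' --- is not a proof: $\sigma$ is a random time depending on $X$, the law of $\La_\sigma$ conditioned on $\{\sigma<\infty\}$ is not $\pi$, and even exact stationarity would not give the uniform-in-$i_1$ control you need. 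The paper avoids the issue entirely by choosing the return set to be $\{x:|x|\le r_0\}\times\{1,\dots,m_0\}$ for a fixed finite $m_0$ --- which is the correct compact set for the notion of transience on $\R^d\times\S$ when $N=\infty$ --- and defining $\tau$ as its hitting time. Then, assuming $\p(\tau<\infty)=1$ and letting $t\ra\infty$ in the supermartingale inequality yields
\[
\Big(\inf_{\{y:|y|\le r_0\}}\rho(y)\Big)\Big(\min_{i\le m_0}g_i\Big)\;\le\;\E\big[\rho(X_\tau)g(\La_\tau)\big]\;\le\;g_l\,\rho(x),
\]
whose left-hand side is a strictly positive constant (compactness of the target set) while the right-hand side tends to $0$ as $|x|\ra\infty$; this contradiction shows the process fails to return to the compact set with positive probability, hence is transient. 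You should adopt this choice of stopping set rather than attempt to control $\La_\sigma$ on all of $\S$.
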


\begin{proof}
  Similar to the argument of Theorem \ref{stab-2}, it holds that for each $i\in\S$, $\Omega g(i)=-\lambda_0 g_i$ and $g_i>0$. Let $V(x,i)=g_i\rho(x)$ for $x\in \R^d$, $i\in \S$. We have for $|x|\geq r_0$, $i\in\S$,
  \begin{equation*}
    \mathscr A V(x,i)\leq \big(Q g(i)+\gamma_i g_i\big) \rho(x)=-\lambda_0g_i\rho(x)\leq 0.
  \end{equation*}
Let
\[\tau=\inf\big\{t>0;\ (X_t,\La_t)\in \{x\in \R^d; |x|\leq r_0\}\times \{1,\ldots,m_0\}\big\},\]
where $m_0$ is a fixed finite number. Applying It\^o's formula to $(X_t,\La_t)$ with $X_0=x$, $\La_0=l$, and $|x|>r_0$, $l>m_0$, we obtain
\begin{equation}\label{Ito}
\E V(X_{t\wedge \tau}, \La_{t\wedge \tau})=V(x,l)+\E\int_0^{t\wedge \tau} \mathscr A\, V(X_s,\La_s)\d s\leq V(x,l)=g_l\rho(x).
\end{equation}
Case 1. When  $\lim_{|x|\ra \infty} \rho(x)=0$, if $\p(\tau<\infty)=1$, then passing $t$ to $\infty$ in (\ref{Ito}), we obtain
\begin{equation}\label{ine-1}\inf_{\{y;|y|\leq r_0\}}\rho(y)\min_{i\leq m_0} g_i\leq \E\big[ \rho(X_\tau)g(\La_\tau)\big]\leq g_l\rho(x).
\end{equation}
Since the set $\{y;|y|\leq r_0\}\times \{1,\ldots,m_0\}$ is compact and the functions $\rho$ and $g$ are positive, the left-hand side of (\ref{ine-1}) is strictly positive. Since $x$ is arbitrary, letting $|x|\ra \infty$, the right-hand side of (\ref{ine-1}) tends to 0, which is a contradiction. Therefore, $\p(\tau<\infty)>0$ and $(X_t,\La_t)$ is transient.

Case 2. When $\lim_{|x|\ra\infty}\rho(x)=\infty$, we consider another stopping time
\[\tau_K=\inf\big\{t>0;\ |X_t|\geq K\big\}.\]
As the process $(X_t,\La_t)$ is nonexplosive, $\tau_K$ increases to $\infty$ a.s. as $K\ra \infty$. 
Since $\liminf_{i\ra\infty} g_i\neq 0$ and we have proved that $g>0$, we get $\inf_{i\in\S} g_i>0$. By It\^o's  formula, we get
\[\E \big[g(\La_{t\wedge\tau_K\wedge\tau})\rho(X_{t\wedge\tau_K\wedge\tau})\big]\leq \rho(x)g_l.\]
This yields
\begin{gather*}
\p(\tau>\tau_K)\leq \frac{\rho(x)g_l}{\inf_{\{y;|y|\geq K\}}\rho(y)\inf_{i\in \S} g_i}.
\end{gather*}
Since $\lim_{|x|\ra \infty }\rho(x)=\infty$, we obtain that  $\inf_{\{y;|y|\geq K\}} \rho(y)\ra \infty$ as $K\ra \infty$ and
\begin{equation*}
  \p(\tau=\infty)\leq \lim_{K\ra \infty}\frac{\rho(x)g_l}{\inf_{\{y;|y|\geq K\}}\rho(y)\inf_{i\in \S} g_i}=0.
\end{equation*}
Therefore, $\p(\tau<\infty)=1$ and the process $(X_t,\La_t)$ is recurrent.
\end{proof}

To apply our criteria, it is necessary to check the condition $\lambda_0>0$. There are many works and methods devoted to the estimate of the eigenvalues of Dirichlet forms. Next, we generalize the lower estimate of the principal eigenvalue of a Dirichlet form given in \cite{Ch00}. The usefulness of this lower estimate will be seen from the examples given in the end of this section. Following the approach of \cite{Ch00}, we first establish a variational formula for the bilinear form $\mathcal{E}(f)$ defined by (\ref{dirich}).

\begin{prop}\label{prop-1}
  Assume that $(q_i)_{i\in \S}$ and $(|\gamma_i|)_{i\in\S}$ are bounded. Let $f\in L^2(\pi)$ be nonnegative, then
  \[\mathcal{E}(f)=\sup_{g}\la f^2/g,-\Omega g\raa,\]
  where the supremum takes over all measurable functions $g$ which are strictly positive (i.e. $g\geq c_g>0$ for some constant $c_g$).
  Furthermore, if there exists strictly positive $g$ such that $\Omega g\leq -\lambda g$ for some $\lambda>0$, then $\lambda_0\geq \lambda$.
\end{prop}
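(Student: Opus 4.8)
The plan is to follow the dual variational method of Chen \cite{Ch00}, the only new feature being that the potential $(\gamma_i)$ is sign–indefinite, so that $-\Omega$ is merely a bounded self–adjoint operator on $L^2(\pi)$ rather than the generator of a Dirichlet form. Boundedness of $-\Omega$, and the fact that $\frac12\sum_{i,j}\pi_iq_{ij}(f_i-f_j)^2\le 2(\sup_iq_i)\|f\|^2<\infty$ for every $f\in L^2(\pi)$, both come from the standing hypotheses $\sup_iq_i<\infty$ and $\sup_i|\gamma_i|<\infty$, and guarantee that all the Dirichlet–type series below converge.

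\textbf{Step 1: $\la f^2/g,-\Omega g\raa\le\mathcal{E}(f)$ for every strictly positive $g$ and every nonnegative $f\in L^2(\pi)$.} Using $-\Omega g(i)=\sum_{j\neq i}q_{ij}(g_i-g_j)-\gamma_ig_i$ and symmetrising the resulting double sum via $\pi_iq_{ij}=\pi_jq_{ji}$ (legitimate since the positive parts of all the sums involved are summable), one gets the identity
\[\la f^2/g,-\Omega g\raa=\frac12\sum_{i\neq j}\pi_iq_{ij}(g_i-g_j)\Big(\frac{f_i^2}{g_i}-\frac{f_j^2}{g_j}\Big)-\sum_i\pi_i\gamma_if_i^2,\]
an expression well defined in $[-\infty,+\infty)$. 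Subtracting it from $\mathcal{E}(f)=\frac12\sum_{i\neq j}\pi_iq_{ij}(f_i-f_j)^2-\sum_i\pi_i\gamma_if_i^2$, the potential terms cancel and there remains $\tfrac12\sum_{i\neq j}\pi_iq_{ij}$ times the quantities $(f_i-f_j)^2-(g_i-g_j)\big(f_i^2/g_i-f_j^2/g_j\big)$, each of which is $\ge0$ by the elementary inequality
\[(a-b)\Big(\frac{u^2}{a}-\frac{v^2}{b}\Big)\le(u-v)^2,\qquad a,b>0,\ u,v\ge0,\]
which is immediate from $\tfrac{b}{a}u^2+\tfrac{a}{b}v^2\ge2uv$. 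Hence $\mathcal{E}(f)-\la f^2/g,-\Omega g\raa\ge0$.

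\textbf{Step 2: the supremum equals $\mathcal{E}(f)$.} The choice $g=f$ would give equality but $f$ need not be strictly positive, so I would use the admissible test functions $g_\veps=f+\veps$, $\veps>0$, which are measurable and bounded below by $\veps$. Since $\Omega$ is bounded, $\Omega g_\veps=\Omega f+\veps\gamma_\cdot\in L^2(\pi)$, $0\le f^2/g_\veps\le f\in L^2(\pi)$, and
\[\la f^2/g_\veps,-\Omega g_\veps\raa=\sum_i\pi_i\frac{f_i^2}{f_i+\veps}\big(-\Omega f(i)\big)-\veps\sum_i\pi_i\frac{f_i^2}{f_i+\veps}\gamma_i\]
is an absolutely convergent series. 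Letting $\veps\to0^+$: in the first sum $f_i^2/(f_i+\veps)\uparrow f_i$ with $f_i|\Omega f(i)|$ a summable dominating function, so dominated convergence gives $\sum_i\pi_i f_i(-\Omega f(i))=\la f,-\Omega f\raa=\mathcal{E}(f)$; the second sum is bounded by $\veps(\sup_i|\gamma_i|)\sum_i\pi_if_i\le\veps(\sup_i|\gamma_i|)\|f\|\to0$. Thus $\sup_g\la f^2/g,-\Omega g\raa\ge\mathcal{E}(f)$, which together with Step 1 gives the variational formula.

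\textbf{Step 3 and the main obstacle.} For the last assertion, suppose $g$ is strictly positive with $\Omega g\le-\lambda g$, $\lambda>0$, so $-\Omega g\ge\lambda g>0$ pointwise. For $f\in\mathscr D(\mathcal{E})$ with $\|f\|=1$ one has $\mathcal{E}(f)\ge\mathcal{E}(|f|)$ (because $(|f|_j-|f|_i)^2\le(f_j-f_i)^2$), and applying Step 1 to $|f|$,
\[\mathcal{E}(f)\ge\mathcal{E}(|f|)\ge\la f^2/g,-\Omega g\raa\ge\la f^2/g,\lambda g\raa=\lambda\|f\|^2=\lambda,\]
so $\lambda_0\ge\lambda$ upon taking the infimum. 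The elementary inequality and the algebra are routine; the genuinely delicate point is the bookkeeping of the infinite, sign–indefinite double sums — making the symmetrisation in Step 1 rigorous, and justifying the passage to the limit in Step 2 — and it is precisely there that the boundedness of $(q_i)$ and of $(|\gamma_i|)$ (hence boundedness of $-\Omega$ on $L^2(\pi)$) is indispensable.
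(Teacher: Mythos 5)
Your proof is correct and follows essentially the same route as the paper: the upper bound via symmetrization with $\pi_iq_{ij}=\pi_jq_{ji}$ and a two-point AM--GM inequality, the lower bound by testing with $f+\veps$ and passing to the limit (you use dominated convergence where the paper uses Fatou's lemma on the nonnegative summands), and the final assertion from $-\Omega g\geq\lambda g$ together with $\mathcal{E}(f)\geq\mathcal{E}(|f|)$. The only differences are cosmetic, and your explicit treatment of the possibly infinite cross term and of the $\mathcal{E}(f)\geq\mathcal{E}(|f|)$ reduction is if anything slightly more careful than the paper's.
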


\begin{proof}
  1)\ Let $\tilde g=g\mathbf 1_{f\neq 0}$, then $\la f^2/g,-\Omega g\raa \leq \la f^2/\tilde g,-\Omega g\raa$, which can be seen from the formula below.
  \begin{align*}
    \la f^2/g,-\Omega g\raa&=\sum_{i} \pi_i \frac{f_i^2}{g_i}\big(\sum_{j\neq i} q_{ij} (g_i-g_j)-\gamma_i g_i\big)\\
    &=\sum_{i}\sum_{j\neq i} \pi_i q_{ij} \frac{f_i^2}{g_i}(g_i-g_j)-\sum_{i}  \pi_i \gamma_i f_i^2\\
    &=\sum_{i} \pi_i q_i f_i^2-\sum_{i}\sum_{j\neq i} \pi_i q_{ij} \frac{f_i^2}{g_i} g_j-\sum_{i} \pi_i\gamma_if_i^2.
  \end{align*}
  We should verify whether the minus of  two infinite series is of meaning.
  Let $h_i=f_i/\tilde g_i$ then $h_i>0$.
  The previous equation
  \begin{align*}
    &=\sum_{i}\pi_ig_if_i^2-\sum_{i}\sum_{j\neq i}  \pi_i q_{ij} f_i h_i\frac{f_j}{h_j}-\sum_{i}\pi_i\gamma_if_i^2\\
    &=\sum_{i}\pi_iq_i f_i^2-\frac 12 \sum_{i}\sum_{j\neq i} \pi_i q_{ij}f_if_j\Big(\frac{h_j}{h_i}+\frac{h_i}{h_j}\Big)-\sum_{i}\pi_i\gamma_i f_i^2\\
    &\leq \sum_{i} \pi_iq_i f_i^2 -\sum_i\sum_{j\neq i}\pi_iq_{ij} f_if_j-\sum_{i}\pi_i\gamma_if_i^2\\
    &=\mathcal{E}(f).
  \end{align*}
  2) For the converse, if $f>c_f>0$, then it follows directly by taking $g=f$. If not, we let $f_n=n^{-1}+f$, then
  \begin{align*}
    \la f^2/f_n,-\Omega f_n\raa &=\frac 12\sum_{i,j}\pi_i q_{ij}\Big(f_j^2+f_i^2-\frac{f_j^2 f_n(j)}{f_n(i)}-\frac{f_j^2 f_n(i)}{f_n(j)}\Big)-\sum_{i}\pi_i\gamma_i f_i^2\\
    &=\frac12 \sum_{i,j}\pi_iq_{ij} \Big(\frac{f_j^2}{f_n(j)}-\frac{f_i^2}{f_n(i)}\Big)\big(f_n(j)-f_n(i)\big)-\sum_{i}\pi_i\gamma_i f_i^2.
  \end{align*}
  Since \[\Big(\frac{f_j^2}{f_n(j)}-\frac{f_i^2}{f_n(i)}\Big)\big(f_n(j)-f_n(i)\big)\geq 0,\] by Fatou's lemma, $\mathcal{E}(f)\leq \liminf_{n\ra \infty} \la f^2/f_n,-\Omega f_n\raa\leq \sup_g\la f^2/g,-\Omega g\raa$.

  The last statement follows directly from the variational formula for $\mathcal{E}(f)$.
\end{proof}

To deal with the general $(q_{ij})$ and $(\gamma_i)$, we need use the localization method to approximate the general situation. As $\|f\|_{\mathcal{E}}$ is no longer a norm, the triangular inequality does not hold and so we have to approximate $\mathcal{E}(f)$ directly.
\begin{thm}\label{t-var}
  If there exists a measurable function $g$ satisfying $g>0$ and $-\Omega g/g\geq \lambda>0$, then $\lambda_0\geq \lambda$.
\end{thm}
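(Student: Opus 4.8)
The plan is to remove the boundedness assumption on $(q_i)$ and $(|\gamma_i|)$ in Proposition \ref{prop-1} by a localization argument, reducing to the bounded case already proven. Fix $g>0$ with $-\Omega g/g\geq\lambda$, and fix an arbitrary $f\in\mathscr D(\mathcal E)$ with $\|f\|=1$; the goal is $\mathcal E(f)\geq\lambda$. First I would truncate the rates: for each $n$ set $q_{ij}^{(n)}=q_{ij}\mathbf 1_{\{i,j\le n\}}$ and $\gamma_i^{(n)}=\gamma_i\mathbf 1_{\{i\le n\}}$, and let $\Omega_n$, $\mathcal E_n$ be the corresponding operator and bilinear form. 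Each $\mathcal E_n$ has bounded rates and bounded potential, so Proposition \ref{prop-1} applies to it. The subtlety is that the reference measure $\pi$ is no longer reversible for $q^{(n)}$ in general, so instead of literally invoking Proposition \ref{prop-1} I would re-run its variational computation: the key algebraic identity
\[
\la f^2/g,-\Omega_n g\raa=\sum_i\pi_i q_i^{(n)} f_i^2-\sum_{i}\sum_{j\neq i}\pi_i q_{ij}^{(n)}\frac{f_i^2}{g_i}g_j-\sum_i\pi_i\gamma_i^{(n)}f_i^2\le \mathcal E_n(f)
\]
still goes through provided $\pi_i q_{ij}^{(n)}=\pi_j q_{ji}^{(n)}$, which holds because the truncation is symmetric in $(i,j)$ and $\pi$ is reversible for $q$. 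Hence $\mathcal E_n(f)\ge \la f^2/g,-\Omega_n g\raa$ for every $n$.

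Next I would pass to the limit on both sides. On the left, $\mathcal E_n(f)\to\mathcal E(f)$: the Dirichlet-type sum $\frac12\sum_{i,j}\pi_i q_{ij}^{(n)}(f_j-f_i)^2$ increases monotonically to $\frac12\sum_{i,j}\pi_i q_{ij}(f_j-f_i)^2$ by monotone convergence, which is finite since $f\in\mathscr D(\mathcal E)$, while $\sum_i\pi_i\gamma_i^{(n)}f_i^2\to\sum_i\pi_i\gamma_i f_i^2$ by dominated convergence once we know $\sum_i\pi_i|\gamma_i|f_i^2<\infty$ — and this last bound is exactly what finiteness of $D(f)$ encodes in the definition of $\mathscr D(\mathcal E)$, so no extra hypothesis is needed. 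On the right, $-\Omega_n g(i)\to-\Omega g(i)$ pointwise, and I would like $\la f^2/g,-\Omega_n g\raa\to\la f^2/g,-\Omega g\raa$. Here the cleanest route is to use the sign structure: writing $-\Omega_n g(i)=\sum_{j\le n, j\neq i}q_{ij}(g_i-g_j)-\gamma_i^{(n)}g_i$ for $i\le n$, the positive part $\sum_j q_{ij}g_i$ grows monotonically, and combined with the lower bound $-\Omega_n g(i)\ge$ (something controllable) one gets convergence by monotone/dominated convergence applied termwise. The hypothesis $-\Omega g/g\ge\lambda$, i.e. $-\Omega g(i)\ge\lambda g_i\ge 0$, gives a uniform lower bound that helps tame the off-diagonal terms.

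Putting the two limits together yields $\mathcal E(f)=\lim_n\mathcal E_n(f)\ge\lim_n\la f^2/g,-\Omega_n g\raa=\la f^2/g,-\Omega g\raa\ge\lambda\sum_i\pi_i f_i^2=\lambda\|f\|^2=\lambda$, and taking the infimum over such $f$ gives $\lambda_0\ge\lambda$.

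The main obstacle I anticipate is the interchange of limit and summation on the right-hand side $\la f^2/g,-\Omega_n g\raa$ when the rates $q_{ij}$ are genuinely unbounded: the quantity $-\Omega_n g(i)$ may not be monotone in $n$ because truncating adds back a negative term $-q_{ij}(g_i-g_j)$ when $g_j>g_i$, so a naive monotone convergence theorem does not directly apply. I would handle this by splitting $-\Omega g$ into its manifestly convergent pieces — isolating the diagonal $q_i g_i$ part, which is nonnegative and increasing under truncation, from the genuinely signed off-diagonal sum $\sum_j q_{ij}g_j$ — and then bounding $\sum_i \pi_i (f_i^2/g_i)\sum_j q_{ij}g_j$ using that $\sum_i\sum_j\pi_i q_{ij}(f_j-f_i)^2<\infty$ together with $g$ bounded below, so that dominated convergence applies to the truncated tails uniformly. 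If $g$ is additionally bounded above (which one can often arrange, or assume without real loss), this step becomes routine; in full generality it requires a careful but elementary estimate, and I would present it as a lemma isolating precisely the inequality $\mathcal E_n(f)\ge\la f^2/g,-\Omega_n g\raa$ and its stability under $n\to\infty$.
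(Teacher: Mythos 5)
Your proposal is correct in substance but localizes in a genuinely different place than the paper. The paper localizes in the \emph{test function}: it introduces the good sets $G_{nm}=\{i:\ g_i\geq 1/m,\ |\gamma_i|\vee q_i\leq n\}$, defines local eigenvalues $\lambda_{nm}$ over functions supported on $G_{nm}$, proves $\lambda_{nm}\ra\lambda_0$ by a two-sided argument (truncating a near-minimizer $f_\veps$ to $f_\veps\mathbf 1_{G_{nm}}$ for one direction), and then applies Proposition \ref{prop-1} verbatim on each $G_{nm}$ via a local operator $\Omega^{G_{nm}}$, using $-\Omega g\leq-\lambda g\Rightarrow-\Omega^{B}g\leq-\lambda g$ on $B$. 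You instead localize in the \emph{operator}, truncating $q_{ij}$ and $\gamma_i$ while keeping $f$ fixed, which lets you skip the whole $\lambda_{nm}\ra\lambda_0$ step; the price is that you must re-run the variational computation of Proposition \ref{prop-1} for the truncated kernel rather than quote it, which works exactly as you say because the symmetric truncation preserves $\pi_iq_{ij}^{(n)}=\pi_jq_{ji}^{(n)}$ and (after the standard reduction to $f\geq0$ via $\mathcal E(f)\geq\mathcal E(|f|)$, which you should state) all sums are finite. Two remarks on your flagged obstacle. First, your worry about $g$ being bounded below or above is a red herring: since the truncated sums are finite, pointwise positivity $g_i>0$ suffices, so your scheme actually handles the $\inf_ig_i=0$ case more painlessly than the paper, which has to build $g_i\geq1/m$ into $G_{nm}$. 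Second, the limit on the right-hand side is cleaner than you fear: writing $\la f^2/g,-\Omega_ng\raa=A_n-E_n$ with $A_n=\sum_{i\leq n}\pi_i(f_i^2/g_i)(-\Omega g(i))$, monotone convergence (all terms nonnegative because $-\Omega g\geq\lambda g>0$) gives $A_n\uparrow\la f^2/g,-\Omega g\raa\geq\lambda\|f\|^2$, while the error satisfies $E_n\leq\sum_{i\leq n}\pi_if_i^2\,q\big(i,(n,\infty)\big)$ since the discarded off-diagonal piece $\sum_{j>n}\pi_iq_{ij}f_i^2g_j/g_i$ has a favorable sign; this tail vanishes by dominated convergence provided $\sum_i\pi_iq_if_i^2<\infty$. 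That finiteness (together with $\sum_i\pi_i|\gamma_i|f_i^2<\infty$) is an implicit assumption on $\mathscr D(\mathcal E)$ that the paper's own limit $\mathcal E(f_{nm})\ra\mathcal E(f_\veps)$ uses in exactly the same way, so it is a shared gap rather than a defect of your route. Net: your argument is a valid and arguably more direct alternative; the paper's buys black-box reuse of Proposition \ref{prop-1} at the cost of the extra approximation $\lambda_{nm}\ra\lambda_0$.
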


\begin{proof}
  For $n,\,m\in \N$, let
  \begin{gather*}
  G_{nm}=\{i;\ g_i\geq 1/m, |\gamma_i|\vee q_i\leq n\},\\ \lambda_{nm}=\inf\{\mathcal{E}(f);\ f\big|_{G_{nm}^c}=0,\,f\in \mathscr D(\mathcal{E})\}.
  \end{gather*}
  Let \[\mathcal{E}_{nm}(f)=\frac 12 \sum_{i,j\in G_{nm}} \pi_i q_{ij}(f_j-f_i)^2+\sum_{i\in G_{nm}}\pi_i \big(q_i-q(i,G_{nm})\big) f_i^2-\sum_{i\in G_{nm}} \pi_i\gamma_if_i^2,\]
  for $f\in L^2(G_{nm},\pi)$.
  Then it holds that
  for $f\big|_{G_{nm}^c}=0$, $\mathcal{E}(f)=\mathcal{E}_{nm}(f)$, and hence \[\lambda_{nm}=\inf\{\mathcal{E}_{nm}(f);\|f\big|_{G_{nm}^c} \|=1,\ f\in \mathscr D(\mathcal{E})\big\}.\]
  By this formula and the definition of $\lambda_0$, we get   \[\lambda_{nm}\geq \lambda_0\quad \text{and hence}\ \lim_{n,m\ra\infty}\lambda_{nm}\geq \lambda_0.\]
  For the converse, we have for any $\veps>0$ there exists $f_\veps\in \mathscr D(\mathcal{E})$ with $\|f_\veps\|=1$ such that
  $\lambda_0\geq \mathcal{E}(f_\veps)-\veps$. Let $f_{nm}=f_\veps\mathbf 1_{G_{nm}}$. We shall show  $\mathcal{E}(f_{nm})\ra \mathcal{E}(f_\veps)$. Indeed,
  \begin{align*}
    \mathcal{E}(f_{nm})&=\frac 12\sum_{i,j}\pi_iq_{ij}(f_{nm}(j)-f_{nm}(i))^2-\sum_{i}\pi_i\gamma_if_{nm}(i)^2\\
    &=\frac 12\sum_{i,j\in G_{nm}} \pi_iq_{ij}(f_j-f_i)^2+\sum_{i}\pi_i q(i, G_{nm}^c)f_i^2-\sum_{i\in G_{nm}} \pi_i\gamma_if_i^2.
  \end{align*}
  We have
  \begin{align*}
    &\big|\sum_{i} \pi_iq(i,G_{nm}^c)f_i^2+\sum_{i\in G_{nm}^c}\pi_i\gamma_i f_i^2\big|\\
    &\leq \sum_{i}\pi_iq_i f_i^2\frac{q(i,G_{nm}^c)}{q_i}+\sum_{i\in G_{nm}^c}\pi_i|\gamma_i|f_i^2\ra 0,  \end{align*} as $n,\,m\ra \infty$.  So $\lim_{n,m\ra \infty}\mathcal{E}(f_{nm})=\mathcal{E}(f)$. This yields that $\lambda_0\geq \lim_{n,m\ra\infty} \lambda_{nm}-\veps$, and hence $\lambda_0\geq \lim_{n,m\ra\infty}\lambda_{nm}$ by the arbitrariness of $\veps$.

  To complete the proof, we also need the following local operator. Let $B$ be a measurable set, define for $i\in B$
  \begin{gather*}
    q^B_i=q_i,\ q_{ij}^B=q_{ij}\mathbf 1_B(j),\\
    \Omega^B f(i)=\sum_{j\in B} q_{ij}^B f(j)-q_i^B f(i)+\gamma_i f(i).
  \end{gather*}
  If $-\Omega g\geq \lambda g$ then $-\Omega^B g\geq \lambda g$ on $B$ for every measurable set $B$. Indeed,
  \begin{align*}
    \Omega^Bg(i)&=\sum_jq_{ij}^Bg_j-q_i^B g_i+\gamma_i g_i\\
    &\leq \sum_j q_{ij} g_j-q_i g_i +\gamma_i g_i=\Omega g(i)\leq -\lambda g_i,\ i\in B.
  \end{align*}
  Applying this fact to the set $G_{nm}$, we get $\Omega^{G_{nm}} g\leq -\lambda g$ on $G_{nm}$. But on $G_{nm}$, $q_i$ and $|\gamma_i|$ are bounded and $g$ is strictly positive, by Proposition \ref{prop-1}, it holds $\lambda_{nm}\geq \lambda$. As $\lim_{n,m\ra \infty} \lambda_{nm}=\lambda_0$, we get $\lambda_0\geq \lambda$ and the proof is complete.
\end{proof}

\begin{rem}\label{rem-var}
We go back to see the argument of Theorem \ref{t-4.2}. If we can find a measurable $g$ on $\S$ such that $g>0$ and $\Omega g\leq -\lambda g$ for some $\lambda>0$, then we can define $V(x,i)=g_i \rho(x)$ by this $g$. The inequality (\ref{ine-lya-2}) holds with $\lambda_0$ replaced by $\lambda$. Following the proof of Theorem \ref{t-4.2}, we can get that $(X_t,\La_t)$ is transient if $\lim_{|x|\ra \infty}\rho(x)=0$. Under further condition on $g$, we get $(X_t,\La_t)$ is recurrent if $\lim_{|x|\ra \infty}\rho(x)=\infty$. The reason that we use $\lambda_0$ in Theorem \ref{t-4.2} is that $\lambda_0$ is the largest desired one by Theorem \ref{t-var}.
\end{rem}

At last, we give two examples of \rsp with switching in a countable state space to show the usefulness of our criteria. The $Q$-matrices of these two examples are unbounded. Especially, in the second example,  the vector $(\gamma_i)$ given by (A4) is also unbounded.
\begin{exam}\label{exam-1}
  Let $(X_t)$ satisfy the following stochastic differential equation,
  \begin{equation}\label{proc-1}
  \d X_t= \mu_{\La_t}X_t\d t+\d B_t,\quad X_0=x_0\in \R,
  \end{equation} where $(B_t)$ is a Brownian motion on $\R$ and $(\La_t)$ is a birth-death process on $\S=\{0,1,\ldots\}$. $(\La_t)$ and $(B_t)$ are mutually independent. Assume that $\mu_0=-c$, $\mu_i=\gamma$ for $i\geq 1$, where $c, \,\gamma$ are positive constants. Therefor, $(X_t^{(0)})$ is recurrent and $(X_t^{(i)})$ ($i\geq 1$) is transient. Assume $q_{i i+1}=b_i=b(i+1)$ for $i\geq 0$, and $q_{ii-1}=a_i=a(i+1)$ for $i\geq 1$. Here the constants $a$ and $b$ satisfy $a>b>0$, which ensures that $(\La_t)$ is reversible with respect to a probability measure $(\pi_i)$ (cf. Van Doorn \cite{Van}). The $Q$-matrix of birth-death process $(\La_t)$ is obviously unbounded.
  Then the process $(X_t,\La_t)$ is recurrent if $c-b>0$ and $a-b-\gamma>0$.

  Indeed, it is easy to see that for each $i\in\S$, \[L^{(i)}=\frac{1}2\frac{\d^2 }{\d x^2} +\mu_ix\frac{\d }{\d x}.\] Take $\rho(x)=|x|$, then it holds
  \[L^{(i)} \rho(x)=\mu_i\rho(x)\quad \text{for}\ |x|\geq 1,\ i\in \S.\]
  Then we can take $\gamma_i=\mu_i$ for $i\geq 0$ in the condition (A1). Define the bilinear form $\mathcal{E}(f)$ and its principal eigenvalue $\lambda_0$ as (\ref{dirich}) and (\ref{prin-eig}).
  If $c-b>0$ and $a-b-\gamma>0$, there exists a number $\lambda>0$ such that
  \[c-b\geq \lambda>0\ \ \text{and}\ \ a-b-\gamma\geq \lambda>0.\]
  Let $g_i=i+1$ for $i\geq 0$.  We have
  \begin{align*}
    \Omega g(i)&=b_i(g_{i+1}-g_i)+a_i(g_{i-1}-g_i)+\mu_ig_i\\
               &=b(i+1)-a(i+1)+\gamma (i+1)\\ &\leq -\lambda(i+1)=-\lambda g_i, \qquad \text{for}\ i\geq 1,\\
    \Omega g(0)&=b_0(g_1-g_0)+\mu_0 g_0=(b-c)g_0\leq -\lambda g_0.
  \end{align*}
  Applying Theorem \ref{t-var}, we get $\lambda_0\geq \lambda>0$. Combining with the fact $\lim_{|x|\ra \infty}\rho(x)=\infty$, it follows immediately from Theorem \ref{t-4.2} and Remark \ref{rem-var} that $(X_t,\La_t)$  is recurrent.
\end{exam}

\begin{exam}\label{exam-2}
  Let $(X_t)$   satisfy (\ref{proc-1}) with $\mu_0<-1$ and $\mu_i=i$. $(\La_t)$ is a birth-death with birth rate $b_i=1$ for all $i\geq 0$ and death rate $a_i>1+\mu_i(i+1)=i^2+i+1$ for $i\geq 1$. Then the process $(X_t,\La_t)$ is recurrent.

  Indeed, it is easy to check that $(\La_t)$ is reversible with respect to a probability measure $(\pi_i)$.
  Taking $\rho(x)=|x|$, we also have $L^{(i)}\rho(x)=\mu_i\rho(x)$ for $|x|\geq 1$ and $i\geq 0$.
  Hence, $\beta_i=i$ for $i\geq 1$ and $\beta_i$ increases to $\infty$ as $i\ra \infty$. Setting $g_i=i+1$ for $i\geq 0$, it is easy to see that there exists $\lambda>0$ such that
  \begin{align*}
   b_0(g_1-g_0)-\mu_0 g_0&\leq -\lambda g_0,\\
   b_i(g_{i+1}-g_i)+a_i(g_{i-1}-g_i)+\mu_i g_i&\leq -\lambda g_i,\qquad i\geq 1.
  \end{align*}
  By Theorem \ref{t-var}, we have $\lambda_0\geq \lambda>0$. Therefore, the process $(X_t,\La_t)$ is recurrent by Theorem \ref{t-4.2} and Remark \ref{rem-var}.
\end{exam}

\section{Summary}
 In this work, we mainly studied the asymptotic stability in probability of regime-switching diffusion processes. Our switching process can be a Markov chain in a finite state space or in an infinite countable state space, and its switching rate can be state-independent and state-dependent. In particular, for switching in an infinite countable state space, we proposed two methods: one is the finite dimensional projection method, another is the principal eigenvalue method. Finite dimensional projection method can deal with state-dependent switching, but its switching rates must be bounded; the principal eigenvalue method can deal with switching with unbounded rates, but can not be used to deal with state-dependent switching at the present stage. Some concrete examples were constructed to show the usefulness and sharpness of our criteria.

\noindent\textbf{Acknowledgement} \ The authors are very grateful to the referees for their valuable suggestions and comments.

\end{document}